\algnewcommand\algorithmicforeach{\textbf{for each}}
\newlength{\continueindent}
\newcommand*{\ALG@customparshape}{\parshape 2 \leftmargin \linewidth \dimexpr\ALG@tlm+\continueindent\relax \dimexpr\linewidth+\leftmargin-\ALG@tlm-\continueindent\relax}
\apptocmd{\ALG@beginblock}{\ALG@customparshape}{}{\errmessage{failed to patch}}
\newcommand{\Pois}[1]{\text{Pois}\left(#1\right)}
\newcommand{\G}{\mathcal{G}}
\newcommand{\ex}{\mathbb{E}}
\newcommand{\PP}{\mathbb{P}}
\newcommand{\bx}{\mathbf{x}}
\DeclarePairedDelimiter{\abs}{\lvert}{\rvert}
\newtheorem{theorem}{Theorem}
\newtheorem{corollary}[theorem]{Corollary}
\newtheorem{remark}[theorem]{Remark}
\newtheorem{proposition}[theorem]{Proposition}
\newtheorem{lemma}[theorem]{Lemma}
\newcommand{\mB}{\mathsf{B}}
\newcommand{\mG}{\mathsf{G}}
\newcommand{\mT}{\mathsf{T}}
\title{Limits of chordal graphs with bounded tree-width}
\author{Jordi Castellví\thanks{Centre de Recerca Matemàtica, Barcelona. E-mail: jcastellvi@crm.cat} \and Benedikt Stufler\thanks{Vienna University of Technology. E-mail: benedikt.stufler@tuwien.ac.at}}
\begin{document}

\maketitle

\begin{abstract}
We study random $k$-connected chordal graphs with bounded tree-width. Our main results are scaling limits and quenched local limits.
\\

\textbf{Keywords:} random graphs, chordal graphs, tree-width, scaling limit, local limit.

\textbf{MSC classes:} 60C05, 05C80.
\end{abstract}

\section{Introduction}

A graph is \emph{chordal} if every cycle of length at least $4$ contains a chord, i.e., an 
edge between two non-adjacent vertices of the cycle. Chordal graphs can be 
characterized as the graphs in which all minimal separating sets are cliques.
A graph is \emph{$k$-connected} if if has at least $k$ vertices and cannot be
disconnected by removing fewer than $k$ vertices. The graphon limit of
chordal graphs was established by Janson~\cite{JANSON2016}. The
structural and enumerative study of chordal graphs was pioneered by Wormald 
\cite{WORMALD1985}, where he developed a method to find the exact number of chordal
graphs with exactly $n$ vertices for a given $n$ using generating functions. As he also
noted, an interesting property of chordal graphs is that they admit a decomposition into
$k$-connected components for any $k\in\mathbb{N}$, which is not the case for arbitrary 
graphs, where this is only true until $k=3$ (for a detailed explanation of this see 
\cite{CDNR2023}).

\emph{Tree-width} is a fundamental parameter in structural and algorithmic graph theory 
which can be defined in terms of tree-decompositions or, equivalently, in terms of 
$k$-trees. A $k$-tree is a graph obtained from a $(k+1)$-clique by iteratively connecting a 
new vertex to all the vertices of an already existing $k$-clique. Note that $1$-trees are just 
trees with at least two vertices.  The tree-width of a graph $\Gamma$ is the minimum $k$ 
such that $\Gamma$ is a subgraph of a $k$-tree.  In particular, $k$-trees are the 
edge-maximal graphs with tree-width at most $k$. The graph limits of $k$-trees have
been studied both in the labelled and unlabelled setting, see \cite{zbMATH07118076} and 
\cite{JS2020}, respectively.

Labelled chordal graphs with tree-width at most $t$ have been recently enumerated in
\cite{CDNR2023} by means of their decomposition into $k$-connected components for 
$k=1, \dots, t+1$. The only $(t+1)$-connected (chordal) graph with tree-width at most $t$ 
is the $(t+1)$-clique.

For fixed $n, t>0$ and $0\leq k\leq t$, let $\G_{t, k, n}$ denote the set of $k$-connected chordal
graphs with $n$ labelled vertices and tree-width at most $t$. Then, by the first main result of \cite{CDNR2023} there exist  constants $c_{t,k} > 0$ and
$\rho_{t,k}\in (0,1)$ such that
\begin{equation}
	\label{eq:asymnum}
    |\mathcal{G}_{t,k,n}| = c_{t,k}\, n^{-5/2}\, \rho_{t,k}^{-n}\, n! \, (1 + o(1))
    \qquad\text{as } n\to\infty.
\end{equation}
Furthermore, the second result of \cite{CDNR2023} establishes a multi-dimensional central limit theorem for the number of cliques in the uniformly at random selected graph $\mG_{t,k,n}$ from~$\G_{t,k, n}$.
Let $X_{n,i}$ denote the number of $i$-cliques in $\mG_{t,k,n}$ and set ${\bf X_n} = (X_{n,2},\ldots, X_{n,t})$.
Then, we have that
\begin{equation}
	\label{eq:cliqueclt}
    \frac{1}{\sqrt n} \left( {\bf X}_n - \mathbb{E}\, {\bf X}_n\right) \overset{d}\to N(0,{\boldsymbol \Sigma}),
    \quad\text{with }  \mathbb{E}\, {\bf X}_n \sim {\boldsymbol \alpha} n \text{ and } \mathbb{C}\mathrm{ov}\, {\bf X}_n \sim {\boldsymbol \Sigma} n,
\end{equation}
where ${\boldsymbol \alpha} = (\boldsymbol \alpha_i)_{1 \le i \le t-1}$ is a $(t-1)$-dimensional vector of positive numbers and ${\boldsymbol \Sigma}$ is a $(t-1)\times(t-1)$-dimensional positive semi-definite matrix. 

In this paper, we extend the analytic study of labelled $k$-connected chordal graphs with
bounded tree-width by introducing probabilistic methods. Instead of analyzing individual
parameters using systems of equations for generating functions, our approach is to 
consider random graphs with a given number of vertices chosen uniformly. We then 
establish limit objects of the sequence of random graphs that encode asymptotic 
properties of the finite model. This is facilitated via hidden multi-type branching processes. In what follows, chordal graphs with bounded tree-width
will be labelled unless otherwise specified.

Our first result establishes the Brownian tree $\left( \mathscr{T}_e, d_{\mathscr{T}_e} , \mu_{\mathscr{T}_e}
\right)$ constructed in~\cite{MR1085326,MR1166406,MR1207226} as the Gromov--Hausdorff--Prokhorov scaling limit.

\begin{theorem}
	\label{te:scaling}
    There is a constant $\kappa_{t, k}>0$ such that
    \[
        \left( \mG_{t,k,n}, \kappa_{t, k}n^{-1/2}d_{\mG_{t,k,n}}, \mu_{\mG_{t,k,n}} \right) \stackrel{d}{\longrightarrow}
        \left( \mathscr{T}_e, d_{\mathscr{T}_e}, \mu_{\mathscr{T}_e} \right)
    \]
    in the Gromov--Hausdorff--Prokhorov sense as $n \to \infty$.
\end{theorem}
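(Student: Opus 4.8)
The plan is to realise $\mG_{t,k,n}$ as a random tree-like structure and then to invoke the now-standard machinery that produces Brownian CRT scaling limits from subcritical, recursively decomposable combinatorial classes. The starting point is the decomposition of a $k$-connected chordal graph of tree-width at most $t$ into its higher-connectivity components used in \cite{CDNR2023}: peeling connectivity from $k$ up to $t+1$ produces a tree $\mathcal{S}$ whose nodes carry the $(k+j)$-connected components ($0\le j\le t+1-k$) and whose edges record the cliques (of sizes between $k$ and $t$) along which consecutive components are glued, the components at the bottom being $(t+1)$-cliques since, as recalled in the introduction, these are the only $(t+1)$-connected chordal graphs of tree-width at most $t$. (Equivalently one may work directly with the clique tree of $\mG_{t,k,n}$.) This is precisely the decomposition behind \eqref{eq:asymnum}, and the polynomial factor $n^{-5/2}$ appearing there already certifies that the governing generating functions have a square-root singularity, i.e.\ that the class is \emph{subcritical}: the generating function of the building blocks (the $(k+1)$-connected components, described recursively) evaluated at the dominant singularity $\rho_{t,k}$ lies strictly inside the radius of convergence of the outer set/sequence composition scheme.

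Granting this, I would proceed as follows. First, encode $\mG_{t,k,n}$ as a decorated Galton--Watson tree conditioned on having $n$ vertices, in the spirit of the enriched-trees formalism; the subcriticality read off from \eqref{eq:asymnum} supplies the analytic input needed (an offspring law with finite variance and the appropriate aperiodicity after the usual exponential tilt at $\rho_{t,k}$). Second, invoke the general scaling-limit theorem for such subcritical decomposable classes, as was done for random $k$-trees in \cite{DJS2016,JS2020}, to conclude that the decomposition tree $\mathcal{S}_n$, rescaled by a factor of order $n^{-1/2}$, converges in the Gromov--Hausdorff--Prokhorov sense to $(\mathscr{T}_e, d_{\mathscr{T}_e}, \mu_{\mathscr{T}_e})$ up to a deterministic scaling of the metric. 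Third, transfer this to the graph metric: every glueing clique has size at most $t=O(1)$, and a uniformly random building block of size $m$ is again tree-like with diameter $O(\sqrt m)$ while block sizes have exponential tails, so $d_{\mG_{t,k,n}}$ and the natural metric on $\mathcal{S}_n$ agree up to $O(1)$ multiplicative and additive fluctuations uniformly along the $\Theta(\sqrt n)$-diameter skeleton; this fixes the constant $\kappa_{t,k}$, which is the corresponding offspring-variance constant and need not be given in closed form. Finally, for the measure, the uniform probability measure on the $n$ vertices of $\mG_{t,k,n}$ pushes forward, under the identification with $\mathcal{S}_n$, to a measure within $o(1)$ in the Prokhorov distance of the rescaled counting measure on the nodes of $\mathcal{S}_n$, hence converges to $\mu_{\mathscr{T}_e}$; combined with the previous step this yields the stated GHP convergence.

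The main obstacle is the metric comparison in the third step. Unlike for $k$-trees, where each vertex hangs off a bounded-size clique, the intermediate $(k+1)$-connected (and higher) components here may be large, so the decorations are not uniformly bounded, and one must argue recursively that each such component is itself controlled by a tree-like skeleton of sublinear diameter and --- more delicately --- that the hub cliques joining components do not create macroscopic shortcuts distorting the limiting metric. Making this precise requires a quantitative tail bound on the diameter of a random building block (obtained by applying the same subcriticality to the component class) together with a uniform estimate that the accumulated local distortions along a geodesic are $o(\sqrt n)$. Once this distortion estimate is established, the convergence follows from the general theory; identifying the offspring law and checking finite variance, aperiodicity, and the value of the tilt are then routine given the analytic framework already developed in \cite{CDNR2023}.
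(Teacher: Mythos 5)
Your high-level route is the same as the paper's: encode the graph as a decorated conditioned (two-type) Bienaym\'e--Galton--Watson tree via the subcritical recursive decomposition, get a CRT limit for the tree, and then transfer the metric and the measure to the graph. The gap is in the transfer step, and it is not a technicality but the actual heart of the proof. You assert that $d_{\mG_{t,k,n}}$ and the tree metric ``agree up to $O(1)$ multiplicative and additive fluctuations'' and that the accumulated distortion along geodesics is $o(\sqrt n)$, concluding that $\kappa_{t,k}$ is ``the corresponding offspring-variance constant''. For $k\ge 2$ this is wrong in two ways. First, an $O(1)$ multiplicative discrepancy is far too weak to yield GHP convergence to a fixed multiple of the CRT: you need the ratio of graph distance to tree height to converge to a \emph{deterministic} constant, uniformly over all vertices. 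Second, that constant is not $1$ and is not determined by the offspring variance alone: when successive decorations are glued along $k$-cliques, the amount of graph distance gained per tree generation depends on how the consecutive root cliques overlap (a new clique may share $0,1,\dots,k-1$ vertices with the set of closest vertices of the previous one), so the distance from the root clique to a deep clique is an additive functional of a Markov chain on the overlap state, not a sum of independent per-block contributions. The scaling constant therefore contains an extra first-passage-type rate $\gamma_k$ (the linear growth rate of this Markov additive process), and $\kappa_{t,k}$ is the tree constant corrected by $\gamma_k^{-1}$.

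The paper supplies exactly the machinery your sketch lacks: a size-biased spine construction (so that statements about one distinguished path can be transferred, at polynomial cost $O(n^{3/2})$, to statements uniform over all $n$ vertices of $\mT_n$), the observation that along the spine the pair $(X_i,S_i)$ — overlap state and distance to the $i$-th spine clique — is an irreducible aperiodic Markov additive process with exponential moments, and a large-deviation bound for such processes (Iscoe--Ney--Nummelin) giving $h_{\mG_{k,n}^{(k)}}(v)=\gamma_k(1\pm\varepsilon)h_{\mT_n}(v)+\mathcal{O}_p(\log^4 n)$ uniformly; chordality guarantees geodesics must pass through the separator cliques corresponding to tree ancestors, which rules out the ``macroscopic shortcuts'' you flag but do not resolve. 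Your worry about large intermediate components is, by contrast, a non-issue: subcriticality (finite exponential moments of the decoration sizes) makes every block of size $O(\log n)$, so block diameters only contribute the polylogarithmic error terms, not a $\Theta(\sqrt m)$ internal geometry. Without identifying the Markov-additive LLN mechanism (or an equivalent ergodic argument fixing $\gamma_k$), the proposal does not establish the existence of a single constant $\kappa_{t,k}$, so the proof as written is incomplete at its central step; the remaining ingredients you mention (subcriticality read off from \eqref{eq:asymnum}, measure comparison, de-rooting from the clique-rooted to the unrooted model) match the paper's Propositions and Lemma on coupling and de-rooting and are fine at the level of a sketch.
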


Here $d_{\mG_{t,k,n}}$ refers to the graph distance on the vertex set of $\mG_{t,k,n}$, and $\mu_{\mG_{t,k,n}}$ to the uniform measure on this set. Notice that when $k=t$ we recover the result for $t$-trees \cite[Thm. 1]{DJS2016} and that when $k=1$ we recover the result for the connected family, which is mentioned in \cite{CDNR2023} as a consequence of a general theorem about subcritical families \cite{PSW2016}.

Our result contributes to the universality of the Brownian tree, referring to the phenomenon that it arises as Gromov--Hausdorff--Prokhorov scaling limit of several  different models~\cite{MR3382675,MR3342658,MR3573291,MR4132643,bienvenu2022branching,manus,MR3853863}.

Apart from describing the asymptotic global shape, we prove a quenched limit that determines the asymptotic local shape:

\begin{theorem}
	\label{te:local}
    Let $v_n$ denote a uniformly selected vertex of $\mG_{t,k,n}$.  There exists an infinite vertex-rooted chordal graph $\hat{\mG}_{t,k}$ such that
    \[
    	(\mG_{t,k,n}, v_n) \stackrel{d}{\longrightarrow}  \hat{\mG}_{t,k}
    \]
    in the local topology as $n \to \infty$. Furthermore, we have quenched convergence
    \[
    	\mathfrak{L}((\mG_{t,k,n}, v_n) \mid \mG_{t,k,n} )  \stackrel{p}{\longrightarrow} \mathfrak{L}(\hat{\mG}_{t,k}).
    \]
\end{theorem}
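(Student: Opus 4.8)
The strategy is to leverage the decomposition of $k$-connected chordal graphs with tree-width $\le t$ into a tree-like structure. The key insight is that such graphs, when rooted appropriately, can be encoded as a random tree (or forest) of "building blocks" — the atoms of the $k$-connected decomposition, which are glued along $k$-cliques (for $k < t+1$ the pieces are themselves chordal graphs with tree-width $\le t$ that are $(k+1)$-connected "at their separators" in an appropriate sense, and the only $(t+1)$-connected piece is the $(t+1)$-clique). So I'd set up a bijective encoding of $\G_{t,k,n}$ (or a pointed version) in terms of enriched trees, matching the generating-function decomposition of \cite{CDNR2023}.

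Let me think about the structure. Chordal graphs with tree-width $\le t$ decompose via clique-separators; the $k$-connected ones for $1 \le k \le t$ sit inside a hierarchy where a $k$-connected chordal graph is built by gluing $(k+1)$-connected pieces along $k$-cliques, recursively down to the base case $k = t$ where the pieces related to $t$-trees appear. This gives a description of $\mG_{t,k,n}$ as a Galton–Watson-type tree of bounded-size pieces, conditioned on total size $n$, with the offspring distribution determined by the singularity $\rho_{t,k}$ in \eqref{eq:asymnum} — the exponent $-5/2$ in \eqref{eq:asymnum} confirms we are in the generic subcritical/tree-like regime where a simply generated tree description applies.

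First I would make precise the "enriched tree" encoding and identify the relevant (multitype) Boltzmann/Galton–Watson distribution $\xi$ and its weights, verifying criticality $\ex[\xi] = 1$ and finiteness of exponential moments from the analytic estimates behind \eqref{eq:asymnum}. Then, to get the local limit, I would apply the general machinery for local convergence of conditioned Galton–Watson trees (Kesten's tree / the sin-tree), enhanced to carry the finite "decoration" data, to obtain that $(\mG_{t,k,n}, v_n)$ converges in the local topology to a limit $\hat{\mG}_{t,k}$ built from Kesten's tree with an infinite spine of decorated blocks: along the spine one uses the size-biased offspring distribution, and off the spine independent Boltzmann-distributed decorated subtrees are attached. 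The construction of $\hat{\mG}_{t,k}$ as an explicit infinite rooted chordal graph then follows by translating the limiting decorated tree back through the (now infinite) gluing procedure. For the \emph{quenched} statement, I would follow the now-standard approach (as in Stufler's work on local limits, e.g.\ \cite{manus,PSW2016}): it suffices to show that the annealed convergence holds together with a second-moment / two-point decorrelation estimate, namely that for two independent uniform vertices $v_n, v_n'$ the pair $(\mG_{t,k,n}, v_n)$ and $(\mG_{t,k,n}, v_n')$ become asymptotically independent when conditioned on $\mG_{t,k,n}$; equivalently, the number of vertices whose $r$-neighborhood is isomorphic to a given rooted graph $H$, divided by $n$, converges in probability (not just in expectation) to $\PP(\hat{\mG}_{t,k} \in \text{ball}_r = H)$. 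This concentration follows from a variance bound, which in the conditioned Galton–Watson tree picture reduces to controlling the number of pairs of spine-like vertices — a computation analogous to the additive-functional variance estimates used to prove \eqref{eq:cliqueclt}.

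The main obstacle I anticipate is not the abstract local-limit machinery but making the combinatorial decomposition fully rigorous at the level needed here: one must show that the bijection between pointed $k$-connected chordal graphs and decorated trees is compatible with graph distances up to bounded distortion (so that neighborhoods in the graph correspond to bounded-depth neighborhoods in the decorated tree and vice versa), and one must handle the re-rooting carefully since the natural root of the tree encoding is an edge/clique rather than the uniform vertex $v_n$ — this requires an "exchangeability" or "random re-rooting" argument to transfer the Galton–Watson local limit at the root to a local limit at a uniform vertex. A secondary technical point is that the pieces are glued along $k$-cliques shared between parent and child, so vertices are shared between adjacent blocks; one must track multiplicities correctly so that the uniform measure on vertices of $\mG_{t,k,n}$ corresponds to the right (non-uniform, size-biased over blocks) measure on the decorated tree. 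Once these combinatorial-geometric correspondences are pinned down, both the annealed and quenched convergence should follow from the general results cited in the excerpt.
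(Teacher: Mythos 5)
Your overall route is essentially the paper's: encode the $k$-clique-rooted graph as a two-type Bienaym\'e--Galton--Watson tree whose black vertices carry decorations from $\mathrm{SET}(\mathcal{G}_{k+1}^{(k)})$, verify criticality $\ex[\xi]=1$ and exponential moments from the subcritical system of \cite{CDNR2023}, take a Kesten-type decorated limit with a size-biased spine, deduce the quenched statement from concentration of neighbourhood counts, and transfer from the clique-rooted to the uniform model using the concentration of the number of $k$-cliques from \eqref{eq:cliqueclt}. However, one step of your plan would fail as stated: the claim that the encoding is ``compatible with graph distances up to bounded distortion, so that neighbourhoods in the graph correspond to bounded-depth neighbourhoods in the decorated tree and vice versa.'' Because consecutive blocks are glued along $k$-cliques that may overlap, a single graph vertex can persist through arbitrarily many blocks: already for $k=2$, a chain of triangles all containing one fixed vertex has tree depth $\ell$ but graph distance $1$ from the root clique, so a radius-$1$ graph neighbourhood of that vertex is not determined by any bounded-depth extended fringe subtree. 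There is therefore no deterministic distance compatibility to prove, and, more importantly, it is not even automatic that the candidate limit object obtained by blowing up the infinite decorated spine tree is locally finite, i.e.\ that $\hat{\mG}_{t,k}$ exists as an element of the local space at all.

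The paper closes exactly this gap probabilistically rather than combinatorially: it defines the blow-up map $\Phi$ on marked decorated trees, shows that $\Phi$ is continuous precisely at the trees whose blow-up is locally finite (the set $\mathfrak{A}_0$), and proves $(\mT^\circ,\alpha^\circ)\in\mathfrak{A}_0$ almost surely by a Borel--Cantelli argument along the spine: independently at each spine vertex, with a fixed positive probability the root cliques of consecutive spine decorations are glued disjointly, so no vertex undergoes infinitely many gluings. Annealed and quenched convergence of the graphs then follow from the quenched decorated-tree limit by the continuous mapping theorem applied to $\Phi$ and to its push-forward $\Phi^*$ on laws. Your outline needs this local-finiteness/continuity-point argument (or an equivalent) in place of the bounded-distortion step; the remaining ingredients you list --- criticality, size-biasing, concentration of $r$-neighbourhood counts for the quenched claim (the paper gets this from a general quenched fringe theorem plus disjointness of occurrences and Chernoff bounds), and the re-rooting via clique counts --- do match the paper's proof.
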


Our third result is a tail-bound for the diameter  $\mathrm{D}(\mG_{t,k,n})$ of $\mG_{t,k,n}$. Such bounds have been determined for models of random trees~\cite{MR3077536,MR4479914,addarioberry2022random}, and in fact our proof makes use of the main result of~\cite{addarioberry2022random}. 

\begin{theorem}
	\label{te:tailbound}
	There exist constants $C,c>0$ (that may depend on $t$ and $k$) such that for all $n \ge 1$ and $x>0$
	\[
		\mathbb{P}(\mathrm{D}(\mG_{t,k,n}) \ge x) \le C \exp(-cx^2 / n).
	\]
\end{theorem}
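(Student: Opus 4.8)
The plan is to exploit the tree-like nature of $\mG_{t,k,n}$ — the very structure that underlies Theorem~\ref{te:scaling} — and to transfer a sub-Gaussian tail bound for the height of a random tree into a tail bound for the graph diameter. I would organise the argument in three steps.

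\textbf{Step 1: Tree encoding.} Using the decomposition of $k$-connected chordal graphs of tree-width at most $t$ into their higher-connectivity components (as in \cite{CDNR2023}, and as already exploited in the proof of Theorem~\ref{te:scaling}), a graph in $\G_{t,k,n}$ is encoded bijectively by a decorated tree $\mathsf{T}_n$: the nodes carry the indecomposable ``bricks'' (glued along $k$-cliques at the coarsest level of the recursion, and ultimately along cliques, with the $(t+1)$-clique as the terminal object), together with a consistent labelling. The structural facts I would record are: (i) the number of vertices of the graph and the size of $\mathsf{T}_n$ are comparable, $n \le c_1 |\mathsf{T}_n| + c_1$; and (ii) $\mathsf{T}_n$ has the law of a (possibly multi-type) Galton--Watson tree conditioned on its size, or more generally a simply generated tree, whose offspring law has exponential moments — the exponential growth rate $\rho_{t,k}^{-1}$ together with the universal polynomial factor $n^{-5/2}$ in~\eqref{eq:asymnum} being precisely the signature of this aperiodic, finite-variance regime. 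Since these facts are implicit in the derivation of Theorem~\ref{te:scaling}, Step~1 amounts mainly to recording them in the form needed here.

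\textbf{Step 2: Metric comparison.} I would show that there is a constant $C_1 = C_1(t,k)$ with
\[
	\mathrm{D}(\mG_{t,k,n}) \le C_1\bigl(\mathrm{height}(\mathsf{T}_n) + 1\bigr).
\]
The idea is that every edge of $\mathsf{T}_n$ corresponds to a separating clique of the graph, so that a path between two vertices of $\mG_{t,k,n}$ may be assembled brick by brick along the tree-path joining their host nodes; each brick has bounded diameter once the decomposition is pushed down to the clique level, so the graph distance between two vertices is at most a constant times the $\mathsf{T}_n$-distance between their host nodes, and $\mathrm{diam}(\mathsf{T}_n) \le 2\,\mathrm{height}(\mathsf{T}_n)$. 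The delicate point, and the main obstacle of the whole proof, is the control of ``large bricks'': either one carries the decomposition all the way down to cliques so that the decorations are genuinely of size $O(1)$, or, if larger chordal bricks are retained, one observes that their internal distances are again governed by a tree of minimal separating cliques and absorbs these subtrees into $\mathsf{T}_n$. This is where the combinatorics of the decomposition must be used in detail rather than as a black box.

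\textbf{Step 3: Conclusion.} With Steps~1 and~2 in place, I would apply the main result of \cite{addarioberry2022random} — in the spirit of the classical sub-Gaussian height bounds for conditioned Galton--Watson trees — to obtain constants $C_2, c_2 > 0$ such that $\mathbb{P}\bigl(\mathrm{height}(\mathsf{T}_n) \ge y\bigr) \le C_2 \exp\!\bigl(-c_2 y^2 / |\mathsf{T}_n|\bigr)$ for all $y > 0$. Combining this with $|\mathsf{T}_n| \le n$ and the comparison of Step~2, noting that $\mathrm{D}(\mG_{t,k,n}) \le n$ deterministically handles the range $x > n$ while the range $x = O(\sqrt n)$ is absorbed by enlarging $C$, yields $\mathbb{P}\bigl(\mathrm{D}(\mG_{t,k,n}) \ge x\bigr) \le C\exp(-c x^2 / n)$ for suitable $C, c > 0$, as claimed. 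A minor technical point remains if $\mathsf{T}_n$ is multi-type or carries a distinguished root: one either passes to an equivalent mono-type conditioned tree or invokes the version of \cite{addarioberry2022random} that already covers such trees.
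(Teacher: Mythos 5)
Your three-step skeleton (tree encoding, height tail bound via \cite{addarioberry2022random}, metric transfer) is indeed the paper's skeleton, but Step~2 is precisely where the real work lies, and as stated it has a genuine gap. The tree for which Steps~1 and~3 are actually available is the two-type tree $\mT_n$ whose black vertices are $k$-cliques and whose decorations are sets of $(k+1)$-connected components; these ``bricks'' are \emph{not} of bounded size (their maximum size in $\mT_n$ is of order $\log n$, cf.\ Lemma~\ref{lem:white_bound}, and a single brick can have diameter comparable to its size), so the claimed inequality $\mathrm{D}(\mG_{t,k,n}) \le C_1(\mathrm{height}(\mathsf{T}_n)+1)$ is simply false for this tree. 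Bounding all bricks by the maximal one only gives $\mathrm{D} \le O(\log n)\,\mathrm{H}(\mT_n)$, which after the height bound yields $\exp(-cx^2/(n\log^2 n))$, weaker than the theorem. Your proposed fix of pushing the decomposition down to cliques so that bricks have size $O(1)$ repairs the metric comparison but breaks Step~3: for that finer clique tree of a uniform element of $\G_{t,k,n}$ you have not shown (and the paper does not show) that it is a conditioned Bienaym\'e--Galton--Watson tree or a mixture of uniform trees with prescribed outdegree profile, so the height bound of \cite{addarioberry2022random} no longer applies without an entirely new multi-level analysis. The ``absorb the subtrees into $\mathsf{T}_n$'' alternative is exactly the missing argument, not a remark.

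The paper closes this gap by a different comparison: the height of a vertex $v$ of $\mG_{k,n}^{(k)}$ is bounded not by a constant times its tree height but by the sum $S(v)$ of the numbers of white children of the black ancestors of $v$, and one proves $\PP(\max_v S(v)\ge x)\le C\exp(-cx^2/n)$ by splitting according to whether $\mathrm{H}(\mT_n)\ge \alpha x$ (handled by the sub-Gaussian height bound of Proposition~\ref{pro:scaling limit}) or $\mathrm{H}(\mT_n)< \alpha x$, in which case a spine of length $\ell\le\alpha x$ in the size-biased tree $\mT^{(\ell)}$ (Lemma~\ref{lemma:size_biased}) would have to carry a white-children sum exceeding $x$, an event of probability $\exp(-\Theta(x))$ by the deviation inequality of Lemma~\ref{le:deviation}; the polynomial factor $O(n^{3/2})$ from conditioning on $\#_2\mT=n$ is then absorbed. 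Two smaller omissions in your write-up: the passage from the $k$-clique-rooted model $\mG_{k,n}^{(k)}$ back to $\mG_{t,k,n}$ needs a quantitative argument (the paper uses the bound $n_k(G)\ge k(n-k)+1$ together with~\eqref{eq:asymptoticclique}), and your height bound in Step~3 should be stated conditionally on the number of (black) tree vertices and then averaged, since $|\mathsf{T}_n|$ is random.
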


An application of these tail-bounds is that, by a standard calculation, they imply that the rescaled diameter $\mathrm{D}(\mG_{t,k,n}) / \sqrt{n}$ is $p$-uniformly integrable for any fixed $p\ge1$. Theorem~\ref{te:scaling} entails distributional convergence
\[
	\mathrm{D}(\mG_{t,k,n}) \kappa_{t, k}n^{-1/2} \stackrel{d}{\longrightarrow} \mathrm{D}(\mathscr{T}_e).
\]
Arbitrarily large uniform integrability allows us to apply the mean convergence criterion, yielding
\[
	\mathbb{E}[\mathrm{D}(\mG_{t,k,n})^p] \kappa_{t, k}^p n^{-p/2} \to \mathbb{E}[\mathrm{D}(\mathscr{T}_e)^p].
\]
The distribution and  moments of the diameter of the Brownian tree are  known~\cite{MR3434205,MR0731595,MR1166406}.

In a similar fashion our results entail distributional convergence and convergence of moments for other Gromov--Hausdorff--Prokhorov continuous functionals that are bounded by the diameter, for example the eccentricity with respect to a random vertex, or the distance between two or more uniformly selected vertices.

\subsection*{Notation}

We recall some notation that will be used in the following sections. The positive integers are denoted by $\mathbb{N}$, and the non-negative integers by $\mathbb{N}_0$. We denote by $\mathfrak{L}(X)$ the law of a random variable $X$.

All unspecified limits are as $n\to\infty$. We use $\stackrel{d}{\longrightarrow}$ and 
$\stackrel{p}{\longrightarrow}$  for convergence in distribution and probability of
random variables. We say that some event happens with high probability if its probability
tends to $1$. We let $\mathcal{O}_p(a_n)$ denote the product of a real number $a_n$ with a stochastically bounded random variable $Z_n$ that we do not specify explicitly. We let $d_{\mathrm{TV}}$ denote the total variation distance between (the laws of) random variables.

\section{Background}

\subsection{Local convergence}

Let $\mathfrak{G}$ denote the collection of rooted locally finite unlabelled graphs. We
define the local distance between two graphs $G, H\in \mathfrak{G}$ as
\[
    d_{\text{loc}}(G, H) = 2^{-\sup\{m\in\mathbb{N}_0 \,\mid\, U_m(G) = U_m(H)\}},
\]
where $U_m(G)$ is the $m$-neighbourhood of the root of $G$, that is, the graph induced 
by all vertices in $G$ at distance at most $m$ from its root. $d_{\text{loc}}$ is indeed a
metric and $(\mathfrak{G}, d_{\text{loc}})$ is a Polish space \cite[Theorem 1]{CURIEN2018},
so we are in a standard setting for studying distributional convergence of random elements of this space. If $X_{\infty}, X_1, X_2, \ldots$ denote  random
graphs from $\mathfrak{G}$, then 
\[
    X_n\stackrel{d}{\longrightarrow} X_{\infty}
\]
if for any bounded continuous function $F: \mathfrak{G}\to\mathbb{R}$ we have that
\[
    \ex [F(X_n)] \to \ex[F(X_{\infty})].
\]
This is in fact equivalent to the condition that for all $r\in\mathbb{N}_0$ and all graphs
$G\in \mathfrak{G}$ it holds that
\[
    \PP(U_r(X_n)=G) \to \PP(U_r(X_{\infty})=G).
\]

\subsection{Global convergence}

The Gromov--Hausdorff--Prokhorov  distance $d_{\mathrm{GHP}}$ is a well-known metric on the collection $\mathfrak{K}$ of  isometry-equivalence classes of compact metric spaces endowed with Borel probability measures.   Detailed expositions of this concept can be found in \cite{zbMATH06610054}, \cite[Ch. 7]{MR1835418},~\cite[Ch. 27]{zbMATH05306371},~\cite{zbMATH06247183}, \cite[Sec. 6]{MR2571957}, and~\cite{janson2020gromovprohorov}.

\section{Sampling Procedures}

\subsection{Generating functions of labelled chordal graphs with bounded tree-width}
For the remainder of this paper, we fix an integer $t\ge 1$ and  omit the index~  $t$ from notation, so that the dependency on $t$ is implicit. In accordance with this convention, we let $\mathcal{G}$ denote the class of
chordal graphs with tree-width at most $t$. 

For a graph $\Gamma \in \mathcal{G}$ and 
$j\in[t]$, let us denote by $n_j(\Gamma)$ the number of $j$-cliques of $\Gamma$.
Define the multivariate (exponential) generating function associated to $\mathcal{G}$ to be
\begin{equation*}
    G(\bx) = G(x_1, \dots, x_t) = \sum_{\Gamma \in \mathcal{G}} \frac{1}{n_1(\Gamma)!}
    \prod_{j=1}^{t} x_j^{n_j(\Gamma)},
\end{equation*}
Let $g_n$ denote the number of chordal graphs with $n$  vertices and tree-width at most 
$t$. 
Then,
\begin{equation*}
    G(x, 1, \dots, 1) = \sum_{n\geq 1} \frac{g_n}{n!} x^n.
\end{equation*}
For $0\le k\le t+1$, let $\mathcal{G}_k$ be the class of $k$-connected chordal graphs
with tree-width at most $t$ and $G_k(\bx)$ be the associated generating function.
In particular, for $k=t+1$ the only graph in the class is the $(t+1)$-clique:
\begin{equation*}
    G_{t+1}(\bx) = \frac{1}{(t+1)!}\prod_{j\in[t]}x_j^{\binom{t+1}{j}}.
\end{equation*}

Rooting the graph $\Gamma\in \mathcal{G}_k$ at an (ordered)
$i$-clique means distinguishing one $i$-clique $K$ of $\Gamma$, choosing an
ordering of  its  vertices and forgetting their labels.
In order to avoid over-counting, we will discount the subcliques of $K$.
Let $i\in [k]$ and define $\mathcal{G}_{k}^{(i)}$ to be the class of $k$-connected chordal 
graphs with tree-width at most $t$ and rooted at an $i$-clique.
Let then $G_k^{(i)}(\bx)$ be the associated generating function, where now for $1\leq j \leq 
i$ the variables $x_j$ mark the number of $j$-cliques that are not subcliques of the root.
Then, for $k \in[t]$, the following equations hold \cite[Lem. 2.5]{CDNR2023}:
\begin{align}
    G_{k+1}^{(k)}(\bx) &= k!\left(\prod_{j=1}^{k-1}x_j^{-\binom{k}{j}}\right)\frac{\partial}{\partial x_k} 
    G_{k+1}(\bx), \label{eq:root}\\
    G_k^{(k)}(\bx) &= \exp \left( G_{k+1}^{(k)}\big( x_1, \ldots, x_{k-1}, x_kG_k^{(k)}(\bx), 
    x_{k+1}, \ldots, x_t \big) \right) , \label{eq:recursive}\\
    G_k(\bx) &= \frac{1}{k!}\left(\prod_{j=1}^{k-1}x_j^{\binom{k}{j}}\right)\int G_k^{(k)}(\bx)\,\, dx_k. \label{eq:unroot}
\end{align}
For ease of notation, we set $G_k^{(k)}(x) := G_k^{(k)}(x, 1, \dots, 1)$, so that
\[
G_k^{(k)}(x) =  \exp (G_{k+1}^{(k)}(x , 1, \dots, 1, G_k^{(k)}(x), 1, \dots, 1 ) ).
\]
Let $\rho_k$ be radius of convergence of $G_k^{(k)}(x)$. We recall the fact that Equation \eqref{eq:recursive} is subcritical  
\cite[Lem. 3.7]{CDNR2023}, implying that
\begin{equation}
	\label{eq:subcrit}
	G_{k+1}^{(k)}\big( \rho_k + \epsilon, 1 \ldots, 1, G_k^{(k)}(\rho_k) + \epsilon, 1, \ldots, 1 \big) < \infty
\end{equation}
for some $\epsilon>0$. Moreover, we can get the asymptotics of $\abs{\mathcal{G}_{k, n}^{(k)}}$ as follows. For ease of notation, we set $G_k(x_1, x_k) := G_k(x_1, 1, \dots, 1, x_k, 1, \dots, 1)$ and $G_k^{(k)}(x_1, x_k) := G_k^{(k)}(x_1, 1, \dots, 1, x_k, 1, \dots, 1)$.
By~\eqref{eq:root},
\begin{align*}
    \frac{\abs{\mathcal{G}_{k, n}^{(k)}}}{n!} &= [x_1^n] G_k^{(k)}(x_1, x_k) \\
    & =k![x_1^n] \frac{\partial}{\partial x_k} G_k(x_1, x_k) x_1^{-k} \\
    & =k![x_1^n] \sum_{\Gamma \in \mathcal{G}_k} n_k(\Gamma) \frac{x_1^{n_1(\Gamma)-k}}{n_1(\Gamma)!} \\
    & =k![x_1^n] \sum_{\Gamma \in \mathcal{G}_{k, n+k}} n_k(\Gamma) \frac{x_1^{n}}{(n+k)!}  \\
    & = \frac{k!}{(n+k)!} \sum_{\Gamma \in \mathcal{G}_{k, n+k}} n_k(\Gamma).
\end{align*}
Therefore, by \eqref{eq:cliqueclt},
\begin{equation} \label{eq:asymptoticclique}
    \binom{n+k}{k}\abs{\mathcal{G}_{k, n}^{(k)}} \sim n \boldsymbol \alpha_k \abs{\mathcal{G}_{k, n+k}}, \qquad \text{as } n\to\infty,
\end{equation}
where $\boldsymbol\alpha_k$ is the $k$-th component of $\boldsymbol\alpha$ .

\subsection{Boltzmann sampling procedure}

We describe a Boltzmann sampler $\Gamma G_k^{(k)}(\rho_k)$ that samples graphs in
$\mathcal{G}_{k}^{(k)}$ according to
\[
    \PP(\Gamma G_k^{(k)}(\rho_k) = A) = \frac{\rho_k^{\abs{A}}}{\abs{A}! G_k^{(k)}(\rho_k)}.
\]
In particular, $\Gamma G_k^{(k)}(\rho_k)$  conditioned on having $n$ vertices is uniformly 
distributed on $\mathcal{G}_{k, n}^{(k)}$.

In order to describe $\Gamma G_k^{(k)}(\rho_k)$, will use an auxiliary Boltzmann sampler $\Gamma S(\rho_k) :=
\Gamma  \exp (G_{k+1}^{(k)}(\rho_k , 1, \dots, 1, G_k^{(k)}(\rho_k), 1, \dots, 1 ) )$, for which we 
give no description. $ \Gamma S(\rho_k)$ samples a set of graphs in
$\mathcal{G}_{k+1}^{(k)}$ with distribution $\Pois{G_{k+1}^{(k)}(\rho_k , 1, \dots, 1, 
G_k^{(k)}(\rho_k), 1, \dots, 1 )}$.

\begin{algorithm}[H]
    \caption{Procedure for $\Gamma G_k^{(k)}(\rho_k)$}
    \label{alg_bfs}
    \begin{algorithmic}[1]
        \State let $R$ be a root $k$-clique with ordered vertices
        \State let $\text{result} := R$
        \State sample a set of graphs $D: = \Gamma S(\rho_k)$
        \ForEach {each graph $E$ in $D$}
            \ForEach {each $k$-clique $C$ in $E$ different from its root}
                \State sample $H:= \Gamma G_k^{(k)}(\rho_k)$
                \State attach $H$ to $E$ by identifying $C$ with the root of $H$ in the unique
                way such that the order of the root vertices respects the labels of $C$.
            \EndFor
            \State attach $E$ to $\text{result}$ by identifying $R$ with the root of $E$ in the
            unique way such that the order of the vertices coincides
        \EndFor
        \State distribute labels to all vertices not in $R$ uniformly at random
        \State \textbf{return} result
    \end{algorithmic}
\end{algorithm}

\subsection{Blow-up sampling procedure}

\label{sec:blowup}

We are going to reformulate the Boltzmann sampling algorithm so that  it first generates a random tree and then creates a graph from that tree using a blow-up procedure.

To this end, we define a pair $(\xi, \zeta)$ of random non-negative integers with probability generating function
\begin{align}
	\label{eq:offspring}
	\ex[z^\xi w^\zeta] = \exp( G_{k+1}^{(k)}(w \rho_k, 1, \ldots, 1, z G_k^{(k)}(\rho_k), 1, \ldots, 1) )  / G_k^{(k)}(\rho_k).
\end{align}
For ease of reference, if we consider a graph rooted at a $k$-clique, we refer to the vertices not contained in that clique as non-root vertices, and to the $k$-cliques different from the root $k$-clique also as non-root $k$-cliques. Thus, $\xi$ and $\zeta$ are distributed like the numbers of non-root $k$-cliques and vertices in $\Gamma S(\rho_k)$.

Since $\Gamma S(\rho_k)$ follows a Boltzmann distribution, conditioning it on a given number of vertices yields the uniform distribution among all possible outcomes with that number of vertices. Thus, if we perform a two-step procedure where we first generate $(\xi, \zeta)$ and then generate a $\mathrm{SET}(\mathcal{G}_{k+1}^{(k)})$ object with accordingly many non-root $k$-cliques and vertices, we create a random structure that is distributed like $\Gamma  S(\rho_k)$.

We let $\mT$ denote a $2$-type Bienaym\'e--Galton--Watson tree such that vertices of the second kind are infertile, and each vertex of the first kind receives mixed offspring according to an independent copy of $(\xi, \zeta)$. The root of $\mT$ is always defined to have type $1$. For ease of reference, we refer to vertices of the first type as `black', and vertices of the second type as `white'. We consider $\mT$ as ordered, so that there is a total order on the black children of any vertex, and a total order on the white children of any vertex. We do not impose any ordering between vertices of different types. Figure \ref{fig:tree} contains an example of this type of trees with $k=2$.

\begin{figure}[h]
    \centering
    \includegraphics{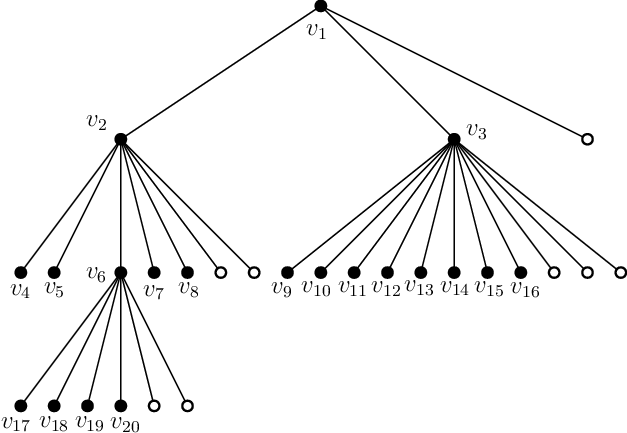}
    \caption{Example of a $2$-type Bienaym\'e--Galton--Watson tree.}
    \label{fig:tree}
\end{figure}

It follows from the recursive description of $\Gamma G_k^{(k)}(\rho_k)$ in Algorithm~\ref{alg_bfs} that the numbers of black and white vertices in $\mT$ are distributed like the  total number of $k$-cliques (including the root clique) and the  number of non-root vertices of $\Gamma G_k^{(k)}(\rho_k)$. In particular, since the graph $\Gamma G_k^{(k)}(\rho_k)$ is almost surely finite, it follows that  the tree $\mT$ is almost surely finite as well.

\begin{figure}[h]
    \centering
    \captionsetup[subfigure]{labelformat=empty}
    \begin{subfigure}{0.25\textwidth}
        \centering
        \includegraphics[width=\textwidth]{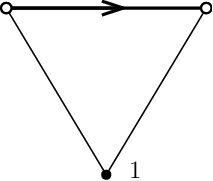}
        \caption{$\alpha(v_1)$}
    \end{subfigure}
    \hspace{3em}
    \begin{subfigure}{0.25\textwidth}
        \centering
        \includegraphics[width=\textwidth]{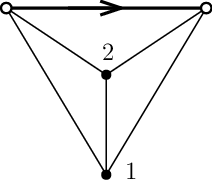}
        \caption{$\alpha(v_2)$}
    \end{subfigure}

    \vspace{1ex}

    \begin{subfigure}{0.25\textwidth}
        \centering
        \includegraphics[width=\textwidth]{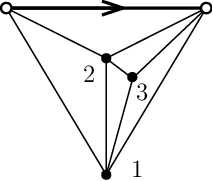}
        \caption{$\alpha(v_3)$}
    \end{subfigure}
    \hspace{3em}
    \begin{subfigure}{0.5\textwidth}
        \centering
        \includegraphics[width=\textwidth]{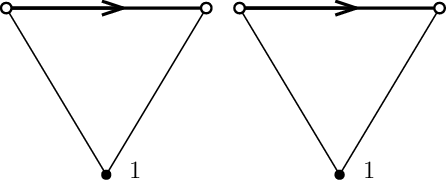}
        \caption{$\alpha(v_6)$}
    \end{subfigure}
    \caption{The non-empty decorations of the tree in Figure \ref{fig:tree}. The thick oriented edges are the ordered roots.}
    \label{fig:tree_decorations}
\end{figure}

Since we can generate $\Gamma S(\rho_k)$ from $(\xi, \zeta)$, we can also generate $\Gamma G_k^{(k)}(\rho_k)$ from $\mT$ by drawing for each black vertex $v$ of $\mT$ a uniform and independent \emph{decoration} $\alpha(v)$ from $\mathrm{SET}(\mathcal{G}_{k+1}^{(k)})$ such that the numbers of non-root $k$-cliques and vertices of $\alpha(v)$ agrees with the numbers of black and white children of $v$. Note that $\mathrm{SET}(\mathcal{G}_{k+1}^{(k)})$ contains a unique structure of size $0$, so this also makes sense when $v$ has no offspring.  The decoration $\alpha(v)$ may be viewed as a single connected graph by gluing together the root $k$-cliques of its components. See Figure \ref{fig:tree_decorations} for a possible decoration of the tree in Figure \ref{fig:tree}, with $k=2$. We refer to the pair $(\mT, \alpha)$ as a \emph{decorated tree}. The chordal graph corresponding to this decorated tree is obtained by gluing the decorations together according to the tree-structure $\mT$. That is, we start with the decoration of the root vertex $o$ of $\mT$, viewed as a single graph. The non-root $k$-cliques of this decoration correspond to the black children of $o$ in a canonical way, by using the total order on the black children and  ordering the non-root $k$-cliques of the decoration according to some rule or fixed choice. So we may proceed recursively by gluing for each black child of $v$ the corresponding $k$-clique of $\alpha(o)$ to the  root clique of the (connected graph corresponding to) the decoration $\alpha(v)$. We then proceed in the same fashion with the grandchildren of $o$, and so on. This is illustrated in Figure \ref{fig:blow_up}. Since this \emph{blow-up procedure} is a reformulation of Algorithm~\ref{alg_bfs}, the resulting graph $\mG^{(k)}_k$ is hence (up to relabelling of vertices) distributed like the Boltzmann random variable $\Gamma G_{k}^{(k)}(\rho_k)$. (Note that analogously we may transform any decorated tree into a graph via this blow-up operation.)

\begin{figure}[h]
    \centering
    \captionsetup[subfigure]{labelformat=empty}
    \begin{subfigure}[t]{0.5\textwidth}
        \centering
        \vtop{\vskip0pt \hbox{\includegraphics[width=\textwidth]{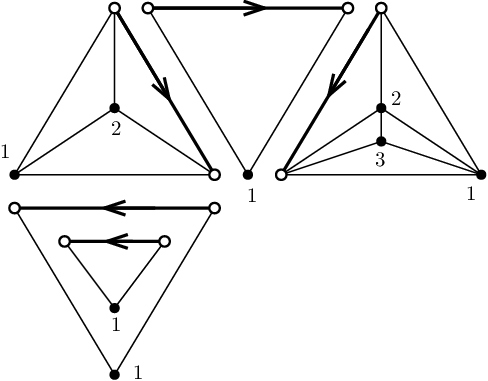}}}
    \end{subfigure}
    \hfill
    \begin{subfigure}[t]{0.42\textwidth}
        \centering
        \vtop{\vskip0pt \hbox{\includegraphics[width=\textwidth]{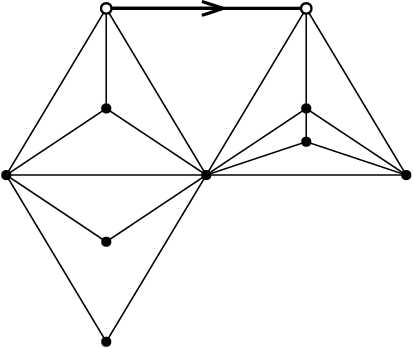}}}
    \end{subfigure}
    \caption{Blow-up of the tree in Figure \ref{fig:tree} with the decoration in Figure \ref{fig:tree_decorations}.}
    \label{fig:blow_up}
\end{figure}

Consequently, the random graph $\mG_{k,n}^{(k)}$ obtained by conditioning $\mG^{(k)}_k$ on having $n$ non-root vertices is uniformly distributed among all graphs from  $\mathcal{G}_{k,n}^{(k)}$. We may view $\mG_{k,n}^{(k)}$ as the result of forming a tree $\mT_n$ obtained by conditioning $\mT$ on having $n$ white vertices, choosing for each vertex $v$ of $\mT_n$ a decoration $\alpha_n(v)$ uniformly at random with such that the numbers of black and white children of $v$ match the numbers of non-root $k$-cliques and vertices of the decoration, and finally performing the same blow-up procedure as before.

\subsection{Structural analysis}

We collect some asymptotic properties that we are going to use frequently. 

\begin{proposition}
	\label{pro:offspring}
	\begin{enumerate} 
		\item The random non-negative integer $(\xi, \zeta)$ has finite exponential moments. That is, there exists $\epsilon>0$ such that
		\[
		\ex[ (1+ \epsilon)^\xi (1+\epsilon)^\zeta ]< \infty.
		\]
		\item We have 
		\[
		\ex[\xi] =  1.
		\]
		\item Letting $\#_1 (\cdot)$ and $\#_2 (\cdot)$ denote the number of black and white vertices of an input tree, we have
		\[
		\PP(\#_2 \mT = n) \sim	\frac{\boldsymbol \alpha_k k! \rho_k^{-k} }{G_k^{(k)}(\rho_k)} n^{-3/2}.
		\]
	\end{enumerate}
\end{proposition}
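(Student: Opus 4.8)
The plan is to prove all three parts by analyzing the bivariate generating function
\[
F(z,w) := \exp\big(G_{k+1}^{(k)}(w\rho_k, 1,\ldots,1, z\, G_k^{(k)}(\rho_k), 1,\ldots, 1)\big) / G_k^{(k)}(\rho_k),
\]
which by \eqref{eq:offspring} is the probability generating function of $(\xi,\zeta)$, and relating it to the univariate function $y(x) := G_k^{(k)}(x)$, which by \eqref{eq:recursive} satisfies the fixed-point equation $y(x) = \exp(G_{k+1}^{(k)}(x,1,\ldots,1,y(x),1,\ldots,1))$. Write $\Phi(x,s) := \exp(G_{k+1}^{(k)}(x,1,\ldots,1,s,1,\ldots,1))$, so that $y(x) = \Phi(x, y(x))$ and $F(z,w) = \Phi(w\rho_k, z\, y(\rho_k))/y(\rho_k)$.

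For part (1): since $F(z,w)$ is, up to the normalizing constant $y(\rho_k)$, the composition of the entire function $\exp$ with $G_{k+1}^{(k)}$ evaluated at arguments that scale linearly in $z$ and $w$, the subcriticality bound \eqref{eq:subcrit} furnishes an $\epsilon>0$ with $G_{k+1}^{(k)}(\rho_k+\epsilon,1,\ldots,1,y(\rho_k)+\epsilon,1,\ldots,1) < \infty$. Choosing $\delta>0$ small enough that $(1+\delta)\rho_k \le \rho_k + \epsilon$ and $(1+\delta)y(\rho_k) \le y(\rho_k)+\epsilon$, we get $F(1+\delta, 1+\delta) < \infty$, which is exactly the claimed finite exponential moment (taking $\epsilon = \delta$ in the statement). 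For part (2): differentiate the fixed-point equation $y(x) = \Phi(x,y(x))$ and use the standard fact that at the (square-root) singularity $\rho_k$ of a subcritical equation, $\partial_s \Phi(x,s)|_{(\rho_k, y(\rho_k))} = 1$; but $\ex[\xi] = \partial_z F(z,w)|_{z=w=1} = \partial_s\Phi(w\rho_k, z y(\rho_k))\cdot y(\rho_k)|_{z=w=1} / y(\rho_k) = \partial_s\Phi(\rho_k, y(\rho_k))$, which therefore equals $1$. (Alternatively, this is forced by the fact that $\mT$ is almost surely finite together with part (1), which rules out the supercritical case, and the fact that a subcritical offspring distribution would contradict the $n^{-3/2}$ asymptotics — but the direct computation via the singular fixed-point equation is cleanest.)

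For part (3): here $\#_2\mT$ is the number of white vertices of the $2$-type tree, and by the blow-up dictionary it equals in distribution the number of non-root vertices of $\Gamma G_k^{(k)}(\rho_k)$; hence $\PP(\#_2\mT = n) = \rho_k^n |\mathcal{G}_{k,n}^{(k)}| / (n!\, y(\rho_k))$ directly from the Boltzmann distribution. Now combine the asymptotic relation \eqref{eq:asymptoticclique}, which gives $|\mathcal{G}_{k,n}^{(k)}| \sim n\,\boldsymbol\alpha_k |\mathcal{G}_{k,n+k}| / \binom{n+k}{k}$, with the enumeration formula \eqref{eq:asymnum}, which gives $|\mathcal{G}_{k,n+k}| = c_{t,k}(n+k)^{-5/2}\rho_{t,k}^{-(n+k)}(n+k)!(1+o(1))$. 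One must also identify $\rho_{t,k}$ with $\rho_k$: indeed $G_k(x,1,\ldots,1)$ and $G_k^{(k)}(x)$ have the same radius of convergence by \eqref{eq:unroot} (integration/differentiation does not change it), and that radius is the $\rho_{t,k}$ appearing in \eqref{eq:asymnum}. Substituting, the factorials telescope: $\binom{n+k}{k}^{-1}(n+k)!/n! = 1$, and $(n+k)^{-5/2}\cdot n \cdot \rho_k^{n}\rho_k^{-(n+k)} \sim n^{-3/2}\rho_k^{-k}$, leaving $\PP(\#_2\mT = n) \sim \boldsymbol\alpha_k\, c_{t,k}\, \rho_k^{-k} n^{-3/2} / y(\rho_k)$. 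To match the stated constant $\boldsymbol\alpha_k k!\rho_k^{-k}/G_k^{(k)}(\rho_k)$ one needs $c_{t,k} = k!$, i.e., the leading constant in \eqref{eq:asymnum} for the $k$-connected count must itself be $k!$ times the analogous constant; this can be extracted from the singularity analysis of $G_k(x,1,\ldots,1)$ via \eqref{eq:unroot} and the known form of $|\mathcal{G}_{k,n}^{(k)}|$, or simply absorbed by noting the proposition is consistent with \eqref{eq:asymptoticclique}.

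The main obstacle I anticipate is bookkeeping the constants in part (3) — in particular pinning down the precise relationship between the constant $c_{t,k}$ of \eqref{eq:asymnum}, the value $G_k^{(k)}(\rho_k)$, the factor $k!$, and the clique-density constant $\boldsymbol\alpha_k$ — rather than any conceptual difficulty; parts (1) and (2) are essentially immediate from subcriticality. A cleaner route to part (3), avoiding reliance on the externally-quoted constant $c_{t,k}$, is to apply the singularity-analysis transfer theorem directly to $F(z,w)$: subcriticality (part 1) plus $\ex[\xi]=1$ (part 2) plus the square-root singularity of $y(x)$ at $\rho_k$ (inherited from \eqref{eq:recursive} being a subcritical smooth system, by a Drmota--Lalley--Woods-type argument) yield, via the standard analysis of monotype GW trees with finite-variance offspring, that $\PP(\#_1\mT = m) \sim c\, m^{-3/2}$; then a law-of-large-numbers/renewal argument relating $\#_2\mT$ to $\#_1\mT$ (each black vertex contributes $\ex[\zeta]$ white vertices on average) transfers this to $\#_2\mT$ with the stated constant, which one reads off from $\ex[\zeta]$, the variance of $\xi$, and $y(\rho_k)$. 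I would present whichever of these two derivations makes the constant-matching most transparent.
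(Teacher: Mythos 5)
Your proposal follows essentially the same route as the paper for all three parts: part (1) from the subcriticality bound \eqref{eq:subcrit} by monotonicity; part (2) from criticality of the fixed-point equation $y=\Phi(x,y)$ at $\rho_k$, where the ``standard fact'' $\partial_s\Phi(\rho_k,y(\rho_k))=1$ that you invoke is exactly what the paper proves (implicit function theorem plus Pringsheim's theorem: otherwise $G_k^{(k)}$ would continue analytically past $\rho_k$), so to be self-contained you should include that short justification rather than cite folklore; and part (3) via the Boltzmann identity $\PP(\#_2\mT=n)=|\mathcal{G}_{k,n}^{(k)}|\rho_k^{n}/(n!\,G_k^{(k)}(\rho_k))$ combined with \eqref{eq:asymptoticclique} and \eqref{eq:asymnum}, which is precisely the paper's argument. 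The identification $\rho_k=\rho_{t,k}$ you obtain from \eqref{eq:unroot} can also be read off directly from \eqref{eq:asymptoticclique}, since that relation forces the two exponential generating functions to have the same radius of convergence.

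The one concrete slip is in your constant bookkeeping for part (3): $\binom{n+k}{k}^{-1}(n+k)!/n!=k!$, not $1$, so the substitution actually yields $\PP(\#_2\mT=n)\sim \boldsymbol\alpha_k\,k!\,c_{t,k}\,\rho_k^{-k}\,n^{-3/2}/G_k^{(k)}(\rho_k)$. Consequently your proposed remedy ``$c_{t,k}=k!$'' is neither needed nor justified: the $k!$ is already produced by the binomial coefficient, and the residual factor $c_{t,k}$ is exactly what the paper's own two-equation substitution produces as well, so the mismatch (if any) is with the displayed constant in the proposition rather than with your method; in the sequel only the order $\Theta(n^{-3/2})$ with a positive constant is used. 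Your alternative route for part (3) via a transfer theorem for $F(z,w)$ and a renewal argument is sketchier than the direct computation and makes the constant harder, not easier, to pin down; the first derivation (with the $k!$ corrected and $c_{t,k}$ simply carried along) is the one to present.
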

\begin{proof}
	The fact that $(\xi, \zeta)$ has finite exponential moments follows readily from Equation~\eqref{eq:subcrit}.  In order to see that $\ex[\xi]=1$, note that Equation~\eqref{eq:recursive} implies that
	\[
		\phi(x, G_k^{(k)}(x)) = 0
	\]
	for
	\[
		\phi(x,y) = y -  \exp\left( G_{k+1}^{(k)}\big(x, 1, \ldots, 1, y, 1, \ldots, 1 \big) \right).
	\]
	By the implicit function theorem, if $\frac{\partial \phi}{\partial y} (\rho_k, G_k^{(k)}(\rho_k)) \ne 0$, then $G_k^{(k)}(x)$ would have an analytic continuation in a neighbourhood of $\rho_k$, which contradicts Pringsheim's theorem~\cite[Thm. IV.6]{MR2483235}. Using~\eqref{eq:recursive}, it hence follows that
	\begin{align*}
	0 &= \frac{\partial \phi}{\partial y} (\rho_k, G_k^{(k)}(\rho_k)) \\
	&= 1 -  G_k^{(k)}(\rho_k)\frac{\partial G_{k+1}^{(k)}}{\partial x_k}\left(\rho_k, 1, \ldots, 1, G_k^{(k)}(\rho_k), 1, \ldots, 1\right) \\
	&= 1 - \ex[\xi].
	\end{align*}
	The probability for the event $\#_2 \mT= n$ is equal to the probability that the Boltzmann sampler $\Gamma G_k^{(k)}(\rho_k)$ produces a graph with $n$ non-root vertices. That is,
	\[
		\PP(\#_2 \mT = n) = \frac{ |\mathcal{G}_{k,n}^{(k)}| \rho_k^n }{n! G_k^{(k)}(\rho_k)}.
	\]
	Using Equations~\eqref{eq:asymptoticclique} and~\eqref{eq:asymnum}, it follows that
\begin{equation} 
	\label{eq:condprob}
\PP(\#_2 \mT = n) \sim	\frac{\boldsymbol \alpha_k k! \rho_k^{-k} }{G_k^{(k)}(\rho_k)} n^{-3/2}
\end{equation}
as $n\to\infty$.
\end{proof}

We collect a few additional properties concerning the degree profile of the black subtree of $\mT_n$.

\begin{proposition}
	\label{pro:degrees}
	\begin{enumerate}
	\item We have $\#_1 \mT_n / n \overset{p} \longrightarrow 1/\mathbb{E}[\zeta]$.
	
	\item For each integer $d \ge 0$ the number $B_d(\mT_n)$ of black vertices with  $d \ge 0$ black children satisfies $B_d(\mT_n) / n \overset{p} \longrightarrow \mathbb{P}(\xi=d)/\mathbb{E}[\zeta]$.
	
	\item There is a constant $C>0$ such that 
	\[
	\lim_{n \to \infty} \mathbb{P}\left(\sum_{d \ge C \log n} B_d(\mT_n)  =0\right) = 1.
	\]
	
	\item We have $\sum_{d \ge 1} d^2 \mathbb{E}[\zeta] B_d(\mT_n) / n  \overset{p} \longrightarrow 1 +  \mathbb{V}[\xi]$.
	
	\end{enumerate}
\end{proposition}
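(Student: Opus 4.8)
The plan is to reduce the proposition to the analysis of a conditioned critical Galton--Watson tree. Decompose $\mT$ into its \emph{black skeleton} $T$ --- the subtree spanned by the black vertices --- together with, for each black vertex $v$, a collection of $\zeta_v$ white leaves attached to $v$. Since white vertices are infertile, the pairs $(c_T(v),\zeta_v)$, where $c_T(v)$ is the number of black children of $v$, are i.i.d.\ copies of $(\xi,\zeta)$, and $T$ on its own is an ordinary Bienaym\'e--Galton--Watson tree with offspring distribution $\xi$; by Proposition~\ref{pro:offspring} it is critical ($\ex[\xi]=1$) and, since $\xi$ has exponential moments, of finite variance. As $\mT_n$ is $\mT$ conditioned on $\#_2\mT=n$, and all four assertions concern the black skeleton of $\mT_n$, everything comes down to understanding $T$ under this conditioning; in particular to controlling the number $M_n:=\#_1\mT_n$ of black vertices.

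The key step is a transfer principle replacing the conditioning on the number of white vertices by an asymptotically equivalent conditioning on the number of black vertices. Conditioned on $\#_1\mT=m$, the skeleton $T$ is $\mathrm{GW}(\xi)$ conditioned on $m$ vertices, and $\#_2\mT=\sum_{v\in T}\zeta_v$ is, given $T$, a sum of $m$ independent lattice variables whose conditional law depends on $T$ only through the degree counts $(B_d(T))_d$ and whose conditional mean $\sum_d B_d(T)\,\ex[\zeta\mid\xi=d]$ concentrates around $m\,\ex[\zeta]$. Standard anti-concentration and local-limit estimates then give $\PP(\#_2\mT=n\mid\#_1\mT=m)=O(m^{-1/2})$, uniformly in $n$, with Gaussian-type decay once $|n-m\,\ex[\zeta]|$ exceeds a large multiple of $\sqrt m$; plugging this, the classical estimate $\PP(\#_1\mT=m)\asymp m^{-3/2}$ for critical finite-variance Galton--Watson trees, and $\PP(\#_2\mT=n)\asymp n^{-3/2}$ (Proposition~\ref{pro:offspring}(3)) into Bayes' formula shows that $M_n$ concentrates around $n/\ex[\zeta]$, which is assertion~(1). (Alternatively, (1) follows from~\eqref{eq:cliqueclt} by identifying $M_n$ with the number of $k$-cliques of $\mG_{k,n}^{(k)}$ and transferring the central limit theorem from the unrooted family to the rooted one.) Moreover, since the counts $B_d(T)$ themselves fluctuate on the scale $\sqrt m$, the extra reweighting of $(B_d(T))_d$ caused by conditioning on $\{\#_2\mT=n\}$ is smooth on the scale of these fluctuations and hence preserves the limits $B_d(T)/|T|\to\PP(\xi=d)$.

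Assertion~(2) now follows from the classical law of large numbers $B_d(T)/|T|\overset{p}{\longrightarrow}\PP(\xi=d)$ for the degree profile of a size-conditioned critical finite-variance Galton--Watson tree, combined with $M_n/n\overset{p}{\longrightarrow}1/\ex[\zeta]$ from~(1). For assertions~(3) and~(4) we invoke the standard many-to-one first-moment estimate $\ex[B_d(\mT_n)]\le Cn\,\PP(\xi=d)$, valid uniformly in $d\ge0$. For~(3): since $\xi$ has exponential moments, $\PP(\xi\ge C\log n)\le n^{-\varepsilon C}$, hence $\ex\bigl[\sum_{d\ge C\log n}B_d(\mT_n)\bigr]\le Cn^{1-\varepsilon C}\to0$ for $C$ large enough, and Markov's inequality gives the claim. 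For~(4): observe that $\sum_{d\ge1}d^2 B_d(\mT_n)=\sum_v c(v)^2$, the total of squared numbers of black children, so that
\[
	\frac{\ex[\zeta]}{n}\sum_{d\ge1}d^2 B_d(\mT_n)=\ex[\zeta]\cdot\frac{M_n}{n}\cdot\frac1{M_n}\sum_v c(v)^2 .
\]
By~(1) and~(2) we have $M_n/n\to1/\ex[\zeta]$ and $B_d(\mT_n)/M_n\to\PP(\xi=d)$ for each $d$; the first-moment estimate together with $\ex[\xi^2]<\infty$ (so that $\ex[\sum_{d>D}d^2 B_d(\mT_n)]\le Cn\sum_{d>D}d^2\PP(\xi=d)=o(n)$ uniformly in $n$ as $D\to\infty$) upgrades this to $\frac1{M_n}\sum_v c(v)^2\overset{p}{\longrightarrow}\ex[\xi^2]=1+\mathbb{V}[\xi]$, whence the right-hand side of the display converges in probability to $1+\mathbb{V}[\xi]$.

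The main obstacle is the transfer principle: one must analyse $\#_2\mT$ given the size of the black skeleton precisely enough --- both the $O(m^{-1/2})$ anti-concentration bound and the Gaussian-type decay away from the mean --- to convert the conditioning on white vertices into one on black vertices, and to verify that this does not disturb the degree statistics. Proposition~\ref{pro:offspring}(1) and the exact polynomial order in Proposition~\ref{pro:offspring}(3) are the inputs that make this possible; the remaining ingredients --- the law of large numbers for the degree profile and the many-to-one first-moment estimate for conditioned critical Galton--Watson trees of finite variance --- are standard, though their two-type versions require some care.
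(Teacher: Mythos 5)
Your high-level reduction (black skeleton is a critical Bienaym\'e--Galton--Watson tree with offspring $\xi$; transfer the conditioning $\#_2\mT=n$ into control of $\#_1\mT_n\approx n/\ex[\zeta]$; then LLN for the degree profile, a first-moment bound for (3), and truncation for (4)) is the right shape, but the central ``transfer principle'' is exactly the nontrivial part and your treatment of it has a genuine gap. First, the claimed uniform anti-concentration $\PP(\#_2\mT=n\mid \#_1\mT=m)=O(m^{-1/2})$ is false in cases covered by the paper: for $k=t$ every component counted by $\Gamma S(\rho_k)$ is a rooted $(t+1)$-clique contributing $(t,1)$ to $(\xi,\zeta)$, so $\xi=t\zeta$ almost surely, the pair is supported on a line, and given the black skeleton the quantity $\#_2\mT=(\#_1\mT-1)/t$ is \emph{deterministic}; the conditional probability is $0$ or $1$, and likewise the per-tree weight $w(T)=\PP(\#_2\mT=n\mid T)$ admits no $O(m^{-1/2})$ bound. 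Even for $k<t$, the uniform-over-trees bound on $w(T)$ that you would need to dominate the reweighting $\ex[w(T)\mathbf{1}_A]/\ex[w(T)]$ fails for trees with atypical degree profiles (all offspring sizes $d$ with $\zeta\mid\xi=d$ degenerate), and ruling those out is precisely what is to be proved -- so the sentence ``the extra reweighting \ldots is smooth on the scale of these fluctuations and hence preserves the limits'' begs the question. The same issue affects your appeal to a ``many-to-one first-moment estimate'' $\ex[B_d(\mT_n)]\le Cn\,\PP(\xi=d)$: this is standard for trees conditioned on their \emph{own} (black) size, not for the conditioning on the number of white vertices, so it again presupposes the transfer.

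The paper avoids all of this by never proving a local limit or anti-concentration statement for the skeleton: it codes the black subtree by i.i.d.\ pairs $(\xi_i,\zeta_i)$ via depth-first search, uses the cycle lemma to write $\PP(\#_1\mT=\ell,\#_2\mT=n)=\ell^{-1}\PP\bigl(\sum_{i\le\ell}(\xi_i-1,\zeta_i)=(-1,n)\bigr)$, and then bounds any conditional probability by $O(n^{3/2})$ times an unconditional one (using Proposition~\ref{pro:offspring}(3)), which is beaten by the medium-deviation bound of Lemma~\ref{le:deviation} and Chernoff bounds. This handles the degenerate case $k=t$ automatically, needs no lower bound on $\PP(\#_2\mT=n\mid\#_1\mT=m)$, and yields stretched-exponential concentration (Equation~\eqref{eq:stretch}) that is reused later in Proposition~\ref{pro:coupling}, which your qualitative LLN-based route would not supply. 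To repair your argument you would either have to prove a genuine bivariate local limit theorem for $(\#_1\mT,\#_2\mT)$ together with uniform control of $w(T)$ on a typical set (handling the lattice/degenerate case $k=t$ separately), or fall back on the conditioning-payment argument -- at which point you have essentially reconstructed the paper's proof.
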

\begin{proof}

	We may construct the black subtree of $\mT$ in a depth-first-search order so that the number of black and white children of the $i$th vertex in that order are given by an independent copy $(\xi_i, \zeta_i)$ of $(\xi,\zeta)$. By standard properties of the depth-first-search the tree is fully explored after $\ell$ steps if $\sum_{i=1}^{\ell} (\xi_i -1) = -1$ and $\sum_{i=1}^k (\xi_i -1) \ge 0$ for all $1 \le k < \ell$. See for example~\cite[Lem. 15.2]{JANSON2012} The total number of white vertices is then given by $\sum_{i=1}^\ell \zeta_i$, and the number of black vertices with $d$ black children by $\sum_{i=1}^\ell \mathbf{1}_{\xi_i= d}$. Using the well-known cycle lemma~\cite[Lem. 15.3]{JANSON2012}, it follows that
	\begin{align*}
		\mathbb{P}( \#_1 &\mT= \ell, \#_2 \mT= n)  \\&= \mathbb{P}\left(\sum_{i=1}^{\ell} (\xi_i -1, \zeta_i) = (-1,n),  \sum_{i=1}^k (\xi_i -1) \ge 0\text{  for all } 1 \le k < \ell \right) \\
		&=\frac{1}{\ell} \mathbb{P}\left(\sum_{i=1}^{\ell} (\xi_i -1, \zeta_i) = (-1,n) \right).
	\end{align*}
	We have shown above that $\zeta$ has finite exponential moments. Hence, by a standard medium deviation inequality recalled in Lemma~\ref{le:deviation} it follows that 
	\[
	\mathbb{P}\left(\sum_{i=1}^\ell \zeta_i = n\right) \le O(1) \exp(- \Theta(|n/\mathbb{E}[\zeta] - \ell | / \sqrt{\ell}))
	\]
	uniformly for all integers $\ell$. For any $0< \delta < 1/4$ we may set \[
	I = \{ \ell \mid |\ell - n / \mathbb{E}[\zeta] | < n^{1/2 + \delta } \}.
	\] Using Equation~\eqref{eq:condprob} we obtain
	\begin{align}
		\label{eq:stretch}
		\mathbb{P}( \#_1 \mT_n \notin I)  &= \mathbb{P}( \#_1 \mT \notin I \mid  \#_2 \mT= n) \nonumber\\
		&\le O(n^{3/2}) \mathbb{P}( \#_1 \mT \notin I ,  \#_2 \mT= n) \nonumber\\
		&\le O(n^{3/2})\sum_{\ell \notin I} \frac{1}{\ell} \exp(- \Theta(|n/\mathbb{E}[\zeta] - \ell | / \sqrt{\ell})) \nonumber\\
		&= \exp(- \Theta(n^{\delta})). 
	\end{align}
	In particular, 
	\[
	\#_1 \mT_n  / n \overset{p}\longrightarrow 1 / \mathbb{E}[\zeta].
	\]
	For notational convenience, we set\[
	I_d = [\mathbb{P}(\xi=d) n / \mathbb{E}[\zeta] - n^{1/2 + 2 \delta}, \mathbb{P}(\xi=d) n / \mathbb{E}[\zeta] + n^{1/2 + 2 \delta}].
	\] Using Equation~\eqref{eq:condprob} it follows that
	\begin{align*}
		\mathbb{P}( B_d(\mT_n) \notin I_d ) &= o(1) + \sum_{\ell \in I} \mathbb{P}(B_d(\mT_n) \notin I_d, \#_1 \mT_n = \ell) \\
		&=  o(1) + \sum_{\ell \in I} \frac{\mathbb{P}(B_d(\mT) \notin I_d, \#_1 \mT = \ell, \#_2 \mT= n)}{\mathbb{P}(\#_2 \mT= n)} \\
		&\le o(1) +  O(n^{3/2})   \sum_{\ell \in I}\mathbb{P}\left(\sum_{i=1}^\ell \mathbf{1}_{\xi_i= d} \notin I_d, \sum_{i=1}^\ell \zeta_i= n \right).
	\end{align*}
	 Applying the Chernoff bounds it follows that \[
	\mathbb{P}\left( \sum_{i=1}^\ell \mathbf{1}_{\xi_i= d} \notin I_d \right) \le \exp\left( - \Theta(n^{\delta}) / \mathbb{P}(\xi=d) \right)
	\]
	uniformly for all $d \ge 0$ and all  integers $\ell$ with  $|\ell - n/\mathbb{E}[\zeta]| < n^{\delta/2}$. It follows that
	\[
	\mathbb{P}( B_d(\mT_n) \notin I_d ) \le \exp(-\Theta(n^{\delta/2}))
	\]
	uniformly for all $d$. In particular,
	\[
	 B_d(\mT_n) / n \overset{p} \longrightarrow \mathbb{P}(\xi=d)/\mathbb{E}[\zeta].
	\]
	
	Next, arguing analogously as before we obtain for any $d_0 \ge 0$
	\begin{align*}
	\mathbb{P}\left( B_d(\mT_n) >0\text{ for some } d \ge d_0 \right) &\le o(1) + O(n^{3/2})   \sum_{\ell \in I}\mathbb{P}\left(\sum_{i=1}^\ell \mathbf{1}_{\xi_i\ge d_0} >0 \right) \\
	&\le o(1) + O(n^{7/2})  \mathbb{P}(\xi \ge d_0). \\
	\end{align*}
	Since $\xi$ has finite exponential moments, we may set $d_0 = C \log n$ for a large enough constant $C>0$ such that this probability tends to zero. This shows:
		\[
	\lim_{n \to \infty} \mathbb{P}\left(\sum_{d \ge C \log n} B_d(\mT_n)  =0\right) = 1.
	\]
	
	Finally, with the intermediate results at hand, we have with probability tending to $1$ as $n \to \infty$ that
	\begin{align*}
		\sum_{d \ge 1} d^2 \mathbb{E}[\zeta] B_d(\mT_n) / n   &=  \sum_{d = 1}^{C \log n} d^2 ( \mathbb{P}(\xi=d) + O(n^{-1/4}) ) \\
		&= \mathbb{V}[\xi] +1 + o(1).
	\end{align*}
	This concludes the proof. 
\end{proof}

We may order the vertices of $\mT_n$ linearly according to a depth-first-search exploration. This induces an ordering of the $\#_1 \mT_n$ black vertices and an ordering on the $\#_2 \mT_n =n$ white vertices. Let $U$ denote a random element of the half-open interval $]0,1]$ sampled according to the uniform distribution. Hence $L_{n} := \lceil U \#_1 \mT_n \rceil$ is distributed like the position of a uniformly selected black vertex of $\mT_n$ in the induced ordering on the black vertices. Likewise, we may select the white vertex at position $\lceil U n \rceil$ among the $n$ white vertices (so that it is uniformly distributed) and let $1 \le L_n' \le \#_1 \mT_n$ denote the position of its unique black parent in the ordering of the $\#_1 \mT_n$ black vertices. 

\begin{proposition}
	\label{pro:coupling}
Given $0 < \delta < 1/4$, there exists a constant $C>0$ such that for all $n$ the tree $\mT_n$ has with probability at least $1 - \exp(-\Theta(n^\delta))$ the property that
	\[
		|L_n - L_n'| \le C n^{3/4},
	\]
regardless of the value of $U$.
\end{proposition}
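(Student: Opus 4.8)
The plan is to reduce the statement to a one-dimensional fluctuation estimate for the numbers of white children along the black subtree of $\mT_n$. Fix the convention that the depth-first search visits the white children of a black vertex before descending into the subtrees rooted at its black children; this is what keeps the white ``detours'' local. Write $b_1, \dots, b_{\#_1 \mT_n}$ for the black vertices of $\mT_n$ in depth-first order and let $\zeta_i$ be the number of white children of $b_i$. Under this convention the exploration of $\mT_n$ traverses $b_1$, then the $\zeta_1$ white children of $b_1$, then $b_2$, then the $\zeta_2$ white children of $b_2$, and so on: between $b_i$ and the next black vertex $b_{i+1}$ exactly the $\zeta_i$ white children of $b_i$ are visited. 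Setting $S_0 := 0$ and $S_i := \zeta_1 + \dots + \zeta_i$ (so that $S_{\#_1 \mT_n} = n$), it follows that the $j$-th white vertex in depth-first order has black parent $b_{p}$ where $p$ is the index determined by $S_{p-1} < j \le S_{p}$; in particular $L_n'$ is the index $p$ with $S_{p-1} < \lceil Un\rceil \le S_{p}$.

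Next I would isolate a good event $\mathcal E_n$, not depending on $U$. By the estimate leading to~\eqref{eq:stretch} in the proof of Proposition~\ref{pro:degrees} we may assume, at the cost of an event of probability $\exp(-\Theta(n^\delta))$, that $|\#_1\mT_n - n/\ex[\zeta]| < n^{1/2+\delta}$. For the partial sums, recall that building the black subtree in depth-first order realizes $(\xi_i,\zeta_i)$ as independent copies of $(\xi,\zeta)$ subject only to the stopping and cycle-lemma conditioning already exploited in Proposition~\ref{pro:degrees}, and that $\zeta$ has finite exponential moments by Proposition~\ref{pro:offspring}. A maximal medium-deviation bound for the centered walk $\bigl(S_i - i\,\ex[\zeta]\bigr)_i$ --- standard given the exponential moments, via an exponential maximal inequality or via a union bound over $O(\sqrt n)$ equally spaced indices together with the fact that no black vertex of $\mT_n$ has more than $O(\log n)$ children (cf. Proposition~\ref{pro:degrees}) --- applied before absorbing the $O(n^{3/2})$ factor from conditioning on $\#_2\mT = n$ exactly as in~\eqref{eq:stretch}, yields
\[
	\PP\Bigl(\, \sup_{0 \le i \le \#_1 \mT_n} \bigl|S_i - i\,\ex[\zeta]\bigr| > n^{3/4} \Bigr) \le \exp(-\Theta(n^{\delta})).
\]
Let $\mathcal E_n$ be the intersection of these two good events, so that $\PP(\mathcal E_n) \ge 1 - \exp(-\Theta(n^\delta))$.

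Finally I would argue deterministically on $\mathcal E_n$. Fix $U\in\,]0,1]$ and put $j := \lceil Un\rceil$ and $p := L_n'$. From $S_{p-1} < j \le S_p$ together with $|S_i - i\,\ex[\zeta]| \le n^{3/4}$ we get $p\,\ex[\zeta] \ge j - n^{3/4}$ and $(p-1)\,\ex[\zeta] < j + n^{3/4}$, so $|p - j/\ex[\zeta]| \le (2n^{3/4}+\ex[\zeta])/\ex[\zeta]$. On the other hand $L_n = \lceil U\,\#_1\mT_n\rceil$ and $U\in\,](j-1)/n,\,j/n]$ give $|L_n - j\,\#_1\mT_n/n| \le \#_1\mT_n/n + 1$, while $|j\,\#_1\mT_n/n - j/\ex[\zeta]| = (j/n)\,|\#_1\mT_n - n/\ex[\zeta]| \le n^{1/2+\delta}$ because $j \le n$. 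Summing these three estimates and using $\delta < 1/4$ together with $\ex[\zeta]\in(0,\infty)$ gives $|L_n - L_n'| \le C n^{3/4}$ for a suitable constant $C>0$ and all sufficiently large $n$; enlarging $C$ to handle the finitely many small values of $n$ completes the argument.

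The only genuine obstacle is the uniform partial-sum estimate of the second step: one needs a deviation bound holding simultaneously for all indices $i \le \#_1\mT_n$ that still survives the polynomial blow-up $O(n^{3/2})$ produced by conditioning on $\#_2\mT = n$. This is routine given the exponential-moment hypothesis on $(\xi,\zeta)$, but it is the place where care is required; the combinatorial identity of the first step and the ceiling bookkeeping of the third are straightforward.
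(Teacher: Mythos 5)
Your proposal is correct and follows essentially the same route as the paper: a good event combining the concentration of $\#_1\mT_n$ from the estimate behind~\eqref{eq:stretch} with a uniform deviation bound for the partial sums of white offspring along the depth-first order (transferred through the $O(n^{3/2})$ conditioning factor and the i.i.d.\ walk representation), followed by the same deterministic sandwich relating $\lceil Un\rceil$, the partial sums, and $L_n$, $L_n'$; the only cosmetic difference is that you work with the threshold $n^{3/4}$ where the paper uses $n^{1/2+\delta}$.
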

\begin{proof}
	We use the same notation as in the proof of Proposition~\ref{pro:degrees}. 
	By Equation~\eqref{eq:stretch} we know that for any $0< \delta < 1/4$ 
	\[
		\mathbb{P}(\#_1\mT_n \notin I) \le \exp(- \Theta(n^\delta))
	\]
	with $
	I = \{ \ell \mid |\ell - n / \mathbb{E}[\zeta] | < n^{1/2 + \delta }\}$.  Consider the event $\mathcal{E}$ that there exists an index $1 \le j \le \#_1 \mT_n$ such that the sum $S$ of the number of white children of the first $j$ black vertices in depth-first-search order does not lie in $J := \{ r \mid |j \mathbb{E}[\zeta] -r | \le   n^{1/2 + \delta}|$. By analogous arguments as for Equation~\eqref{eq:stretch}, we obtain
	\begin{align*}
		\mathbb{P}(\mathcal{E}, \#_1 \mT_n \in I) & \le O(n^{3/2}) \sum_{\ell \in I} \sum_{j=1}^\ell \mathbb{P}\left( \left| \sum_{i=1}^j \zeta_i - j \mathbb{E}[\zeta] \right| > n^{1/2 + \delta}\right) \\&\le O(n^{3/2}) \sum_{\ell \in I} \sum_{j=1}^\ell \sum_{r\notin J} \exp( - \Theta( |j \mathbb{E}[\zeta] - r| / \sqrt{j} )) \\
		&\le \exp(-\Theta(n^{\delta})).
	\end{align*}

	Suppose now that $\#_1 \mT_n \in I$ and that the event $\mathcal{E}$ does not take place. 
	Then, by construction, we know that
	\[
		\frac{L_n}{\#_1{\mT_n}} = \frac{S}{n} + O(1/n)
	\]
	and $|\#_1{\mT_n} - n / \mathbb{E}[\zeta]| \le  n^{1/2 + \delta}$ and $|S - L_n' \mathbb{E}[\zeta]| \le  n^{1/2 + \delta}$, which implies
	\[
		|L_n - L_n'| = O(n^{1/2 + \delta}	).
	\]

\end{proof}

\subsection{De-rooting the $k$-clique rooted random graphs}

The random graph $\mG_{k,n}^{(k)}$ is uniformly distributed among all $k$-connected chordal graphs with tree-width at most $t$ that are rooted at a $k$-clique and having $n$ vertices not in the
root clique.

\begin{lemma}
	\label{le:deroot}
	If we consider $\mG_{k,n}^{(k)}$ and $\mG_{k,n+k}$ as unrooted unlabelled graphs by forgetting about the labels and the root-clique, then
	\[
		d_{\mathrm{TV}}(\mG_{k,n}^{(k)}, \mG_{k,n+k}) \to 0
	\]
	as $n \to \infty$.
\end{lemma}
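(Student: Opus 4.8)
The plan is to exploit the asymptotic counting relation already recorded in the excerpt. Recall that $|\mathcal{G}_{k,n}^{(k)_L}| = \binom{n+k}{k}|\mathcal{G}_{k,n}^{(k)}|$, and by Equation~\eqref{eq:asymptoticclique} we have $\binom{n+k}{k}|\mathcal{G}_{k,n}^{(k)}| \sim n \boldsymbol\alpha_k |\mathcal{G}_{k,n+k}|$. The class $\mathcal{G}_{k,n}^{(k)_L}$ consists of $k$-connected chordal graphs with tree-width at most $t$ on $n+k$ vertices together with a distinguished $k$-clique (whose $k$ vertices carry distinct labels from the rest). So a uniform element of $\mathcal{G}_{k,n}^{(k)_L}$ may be sampled by first taking a uniform graph $\Gamma$ from $\mathcal{G}_{k,n+k}$ with probability proportional to its number $n_k(\Gamma)$ of $k$-cliques, and then picking one of these $k$-cliques uniformly; conversely, forgetting the marked clique gives a size-biased (by $n_k$) element of $\mathcal{G}_{k,n+k}$. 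Since forgetting the root and the labels turns $\mG_{k,n}^{(k)}$ into the same unlabelled, unrooted graph as the one underlying $\mG_{k,n}^{(k)_L}$, the law of the de-rooted $\mG_{k,n}^{(k)}$ is exactly the law of $\mG_{k,n+k}$ size-biased by $n_k$.

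Therefore the total variation distance in question equals the total variation distance between $\mG_{k,n+k}$ and its size-biasing by the random variable $n_k(\mG_{k,n+k})$. The standard bound for this is
\[
	d_{\mathrm{TV}}\bigl(\mG_{k,n+k},\, \text{size-bias by } n_k\bigr) \le \frac{\mathbb{E}\bigl[\,|\,n_k(\mG_{k,n+k}) - \mathbb{E}[n_k(\mG_{k,n+k})]\,|\,\bigr]}{\mathbb{E}[n_k(\mG_{k,n+k})]},
\]
which follows from writing the size-biased probability of any graph $\gamma$ as $\mathbb{P}(\mG_{k,n+k}=\gamma)\, n_k(\gamma)/\mathbb{E}[n_k]$ and summing the absolute differences. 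By Cauchy–Schwarz the numerator is at most $\sqrt{\mathbb{V}[n_k(\mG_{k,n+k})]}$. Now invoke the central limit theorem~\eqref{eq:cliqueclt}: the $k$-th coordinate (here indexing is $X_{n,k}=n_k$) satisfies $\mathbb{E}[n_k(\mG_{k,n+k})] \sim \boldsymbol\alpha_k (n+k) = \Theta(n)$ while $\mathbb{V}[n_k(\mG_{k,n+k})] \sim \boldsymbol\Sigma_{kk}(n+k) = O(n)$. Hence the ratio is $O(\sqrt n)/\Theta(n) = O(n^{-1/2}) \to 0$, which is the claim.

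The main obstacle — really the only point needing care — is the precise identification of the de-rooted law of $\mG_{k,n}^{(k)}$ as the $n_k$-size-biasing of $\mG_{k,n+k}$. One must check that the bijective bookkeeping is right: a graph on $n+k$ unlabelled vertices with a distinguished (unordered) $k$-clique corresponds, after choosing an ordering of the clique and a labelling of the remaining $n$ vertices, to $\binom{n+k}{k}\cdot k!\cdot n!/(\text{automorphism factor})$ labelled-ordered-rooted objects, and one has to confirm these combinatorial factors match those implicit in the relation $|\mathcal{G}_{k,n}^{(k)_L}| = \binom{n+k}{k}|\mathcal{G}_{k,n}^{(k)}|$ and in the Boltzmann/uniform correspondence so that, after passing to unlabelled unrooted graphs, the marked clique is genuinely uniform among all $k$-cliques. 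Once that is in place, everything reduces to the concentration of $n_k$ supplied by~\eqref{eq:cliqueclt}, and the remaining estimates are the routine Cauchy–Schwarz and ratio computation above.
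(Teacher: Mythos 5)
Your proposal is correct and follows essentially the same route as the paper: both arguments identify the de-rooted law of $\mG_{k,n}^{(k)}$ as the $n_k$-size-biasing of $\mG_{k,n+k}$ (this is exactly what the relation $|\mathcal{G}_{k,n}^{(k)_L}|=\binom{n+k}{k}|\mathcal{G}_{k,n}^{(k)}|$ together with Equation~\eqref{eq:asymptoticclique} encodes) and then conclude from the concentration of the number of $k$-cliques supplied by~\eqref{eq:cliqueclt}. The only minor differences are in the finish (you use the exact size-bias total-variation identity plus Cauchy--Schwarz, which even yields a rate $O(n^{-1/2})$, whereas the paper compares point probabilities uniformly on the typical set $|n_k(G)-\boldsymbol\alpha_k n|\le n^{3/4}$), and your bookkeeping concern about automorphism factors dissolves if, as in the paper, one performs the comparison at the level of uniformly relabelled labelled graphs and notes that total variation can only decrease under the projection to unlabelled unrooted graphs.
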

\begin{proof}
	Let $\mathrm{L}(\mG_{k,n}^{(k)})$ denote the result of relabelling all vertices of $\mG_{k,n}^{(k)}$ (including those contained in the root clique) uniformly at random. Furthermore, let $\mathrm{F}(\cdot)$ denote a forgetful functor that takes as input a clique-rooted graph and outputs the unrooted graph obtained by forgetting about the root clique. 
	
	By~\eqref{eq:cliqueclt} the subset $\mathcal{E}_{k, n}$ of all all graphs $G \in  \mathcal{G}_{k,n}$ with
	\[
		| n_k(G) - \boldsymbol \alpha_k n | \le n^{3/4}
	\]
	has the property that $\mathsf{G}_{k,n} \in \mathcal{E}_{k, n}$ with high probability as $n \to \infty$. Using Equation~\eqref{eq:asymptoticclique}, it follows that 
	\begin{align*}
		\frac{\PP(\mathrm{F}(\mathrm{L}(\mG_{k,n}^{(k)})) = G)}{\PP(\mathsf{G}_{k,n+k} = G)} &= \frac{n_k(G) / ( \binom{n+k}{k} |\mathcal{G}_{k,n}^{(k)}|) }{1 / |\mathcal{G}_{k,n+k}| } \to 1
	\end{align*}
	uniformly for all $G \in \mathcal{E}_{k, n+k}$ as $n \to \infty$. This completes the proof.
\end{proof}

\section{Proof of the local limit}

\subsection{Local convergence of random trees}

Consider the set $\mathfrak{X}_f$ of all finite $2$-type plane trees with a root vertex and second marked vertex (which is not necessarily distinct from the root vertex), subject to the following restriction:  We require all trees in $\mathfrak{X}_f$ to have the property that vertices of the second type have no offspring. For ease of reference, we will again refer to vertices of the first type as black vertices, and vertices of the second type as white vertices.

Let $T$ denote a $2$-type plane tree. For any vertex $v$ of $T$ we can form the \emph{fringe subtree} $f(T,v)$ consisting of all descendants of $v$, including $v$ itself.   For any vertex $v$ of $T$ and any integer $h \ge 0$ we can define the \emph{extended fringe subtree} $f^{[h]}(T,v) \in \mathfrak{X}_f$ obtained by taking the fringe subtree $f(T, v_h)$ of $T$ at the $h$-th ancestor $v_h$ of $v$, and marking  it at the vertex of $f(T, v_h)$ that corresponds to $v$. Of course, this is only well-defined if $v$ has height at least $h$ in $T$.  If $v$ has smaller height, we set $f^{[h]}(T, v)$ to some place-holder value. Given $\tau \in \mathfrak{X}_f$ we define the number $N_\tau(T)$ as the number of vertices $v$ of the tree $T$ such that $f^{[h_\tau]}(T,v) = \tau$, with $h_\tau$ denoting the height of the marked vertex of $\tau$. We also say $N_\tau(T)$ counts the \emph{occurrences} of $\tau$ in $T$.

Let $\mathfrak{X}$ denote the union of $\mathfrak{X}_f$ with the set of all locally finite $2$-type plane trees with a marked vertex having a countably infinite number of ancestors, subject to the following restriction: We  require all trees in $\mathfrak{X}$ to have the property that the marked vertex is white,  that all white vertices to have no offspring, and that all extended fringe subtrees of the marked vertex are finite. In particular, any tree in $\mathfrak{X}$ may be decomposed into a (possibly infinite) \emph{spine} given by the ancestors of the marked vertex, to which finite trees are attached.

The space $\mathfrak{X}$ can be equipped with a metric
\[
	d_{\mathfrak{X}}(\tau, \tau') = 2^{- \sup\{h \ge 0 \mid f^{[h]}(\tau) = f^{[h]}(\tau')\}  }, \qquad \tau, \tau' \in \mathfrak{X}.
\]
It is easy to see that this metric is complete and separable~\cite[Prop. 1]{MR4414401}. Weak convergence of probability measures defines a topology on $\mathfrak{M}(\mathfrak{X})$, the space of probability measures on $\mathfrak{X}$, making it a Polish space too.

We let $v_n$ denote a uniformly selected white vertex of our random tree~$\mT_n$. This makes $(\mT_n, v_n)$ a random element of $\mathfrak{X}$. We can also interpret the conditional law $\mathfrak{L}( (\mT_n, v_n) \mid \mT_n )$ as a random element of $\mathfrak{M}(\mathfrak{X})$.

 That is, for any tree $T \in \mathfrak{X}$ with $n$ white vertices we can look at the uniform probability measure that assigns probability $1/n$ to each of the $n$ marked versions of $T$. This measure is a (deterministic) element of $\mathfrak{M}(\mathfrak{X})$. 
 The conditional law $\mathfrak{L}( (\mT_n, v_n) \mid \mT_n )$ may be constructed in this way by taking $T = \mT_n$ to be random, and hence it is a  random element of $\mathfrak{M}(\mathfrak{X})$.

Proposition~\ref{pro:offspring} allows us to apply a general result~\cite[Thm. 1]{MR4414401}, yielding that there exists a random limiting tree $\mT^\circ$ with
\begin{align}
	(\mT_n, v_n)\stackrel{d}{\longrightarrow} \mT^\circ
\end{align}
and
\begin{align}
	\label{eq:treequenched}
	\mathfrak{L}( (\mT_n, v_n) \mid \mT_n)  \stackrel{p}{\longrightarrow} \mathfrak{L}(\mT^\circ).
\end{align}
The first equation is referred to as annealed local convergence, and the second (stronger) equation as quenched local convergence.

The limiting tree $\mT^\circ$ may be described as follows. We first define a size-biased versions $(\xi^\bullet, \zeta^\bullet)$ and $(\xi^\circ, \zeta^\circ)$ of the random variable $(\xi, \zeta)$ by
\begin{align}
	\PP\left( (\xi^\circ, \zeta^\circ)  = (a,b)\right) = \PP\left( (\xi,\zeta) = (a,b)\right) b  /\ex[\zeta]
\end{align}
and
\begin{align}
	\PP\left( (\xi^\bullet, \zeta^\bullet)  = (a,b)\right) = \PP\left( (\xi,\zeta) = (a,b)\right) a.
\end{align}
Now, let $u_1, u_2, \ldots$ denote a sequence of black vertices, which we are going to call the \emph{spine} of $\mT^\circ$. The tip $u_1$ of the spine receives offspring according to $(\xi^\circ, \zeta^\circ)$ and a uniformly selected white child $u_0$ is marked. For each $i \ge 2$ the vertex $u_i$ receives offspring according to an independent copy of $(\xi^\bullet, \zeta^\bullet)$ and we identify $u_{i-1}$ with a uniformly selected black child. The construction of $\mT^\circ$ is finalized by identifying each black non-spine vertex with the root of an independent copy of the unconditioned tree $\mT$.

\subsection{Local convergence of random decorated trees}

Recall from Section~\ref{sec:blowup} that the random decorated tree $(\mT_n, \alpha_n)$ is constructed from $\mT_n$ by drawing for each black vertex $v$ of $\mT_n$ an independent decoration $\alpha_n(v)$  uniformly at random from the class $\mathrm{SET}(\mathcal{G}_{k+1}^{(k)})$ such that the numbers of non-root $k$-cliques and vertices of $\alpha(v)$ agree with the numbers of black and white children of $v$. We refer to this construction as the \emph{canonical way} of decorating the tree $\mT_n$. Of course, this can be performed on any $2$-type plane tree, allowing us to form the decorated tree $(\mT^\circ, \alpha^\circ)$ by assigning random decorations $\alpha^\circ(v)$, $v \in \mT^\circ$ in the canonical way. 

We define the space $\mathfrak{A}$ as the collection of all tuples $(\tau, \alpha_\tau)$ with $\tau \in \mathfrak{X}$ and $\alpha_\tau$ a (deterministic) decoration of $\tau$. That is, $\alpha_\tau= (\alpha_\tau(v))_{v \in \tau}$ is a family of objects from $\mathrm{SET}(\mathcal{G}_{k+1}^{(k)})$ so that for each $v \in \tau$ the numbers of non-root $k$-cliques and vertices of $\alpha_\tau(v)$ agree with the numbers of black and white children of $v$ in $\tau$. (Here we use the index $\tau$ in the notation $\alpha_\tau$ only in order to avoid any confusion with the canonical decoration $\alpha$ of the unconditioned Bienaym\'e--Galton--Watson tree~$\mT$. Of course, multiple decorated versions may correspond to the undecorated tree $\tau$.) We define the subset $\mathfrak{A}_f \subset \mathfrak{A}$ of all such trees $(\tau, \alpha_\tau)$ that are finite.

The notions of extended fringe subtrees and the metric on $\mathfrak{X}$ may be defined analogously for decorated trees, making $\mathfrak{A}$ a Polish space. Analogously to the number $N_\tau(T)$ of occurrences of a finite  marked tree $\tau$ in some plane tree $T$, we may define the number $N_{(\tau, \alpha_\tau)}(T, \alpha_T)$ of occurrences of a finite decorated tree marked tree $(\tau, \alpha_\tau)$ in the tree $T$ endowed with some decoration $\alpha_T$. 

Equation~\eqref{eq:treequenched} may be used to deduce quenched local convergence of random decorated trees:
\begin{proposition}
	\label{pro:quencheddeco}
	As $n \to \infty$, 
	\begin{align*}
		\mathfrak{L}( ((\mT_n, v_n), \alpha_n) \mid (\mT_n,\alpha_n) )  \stackrel{p}{\longrightarrow} \mathfrak{L}(\mT^\circ, \alpha^\circ).
	\end{align*}
\end{proposition}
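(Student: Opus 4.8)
The plan is to reduce the quenched convergence of decorated trees to the already-established quenched convergence~\eqref{eq:treequenched} of the underlying trees $(\mT_n, v_n)$ by showing that the canonical decoration map is, in a suitable sense, continuous and randomized independently across vertices. First I would recall that a sequence of random elements of $\mathfrak{M}(\mathfrak{X})$ (resp. $\mathfrak{M}(\mathfrak{A})$) converges in probability to a constant limit law if and only if, for every finite decorated tree $(\tau,\alpha_\tau)$, the random frequency $N_{(\tau,\alpha_\tau)}(\mT_n,\alpha_n)/n$ converges in probability to $\PP(f^{[h]}(\mT^\circ,\alpha^\circ) = (\tau,\alpha_\tau))$, where $h$ is the height of the marked vertex; this characterization via local frequencies is the criterion underlying~\cite[Thm. 1]{MR4414401}, and I would cite it in that form. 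So it suffices to control these decorated frequencies.

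Next I would condition on the undecorated tree $\mT_n$. Fix a finite decorated tree $(\tau,\alpha_\tau)$ with underlying plane tree $\tau^-$. A vertex $v$ of $\mT_n$ contributes to $N_{(\tau,\alpha_\tau)}(\mT_n,\alpha_n)$ only if $f^{[h]}(\mT_n,v) = \tau^-$ as undecorated trees, and then it contributes precisely when the decorations drawn on the finitely many black vertices of $f(\mT_n, v_h)$ match those prescribed by $\alpha_\tau$. Since in the canonical construction the decorations $\alpha_n(w)$ are drawn independently over black vertices $w$, each uniform among $\mathrm{SET}(\mathcal{G}_{k+1}^{(k)})$-objects with the prescribed numbers of non-root $k$-cliques and vertices (which are determined by the number of black and white children of $w$, hence by $\tau^-$ alone), each eligible vertex $v$ independently has a fixed success probability $q(\tau,\alpha_\tau)$ depending only on $(\tau,\alpha_\tau)$. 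Writing $p(\tau,\alpha_\tau) = q(\tau,\alpha_\tau)\,\PP(f^{[h]}(\mT^\circ) = \tau^-)$, one checks directly from the description of $(\mT^\circ,\alpha^\circ)$ that $p(\tau,\alpha_\tau) = \PP(f^{[h]}(\mT^\circ,\alpha^\circ) = (\tau,\alpha_\tau))$, since $\alpha^\circ$ is also drawn canonically and independently of the tree shape $\mT^\circ$.

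Then I would conclude by a conditional second-moment (or Chebyshev) argument. Let $M_n = N_{\tau^-}(\mT_n)$ be the number of undecorated occurrences; by~\eqref{eq:treequenched} we have $M_n/n \overset{p}\to \PP(f^{[h]}(\mT^\circ)=\tau^-)$. Conditionally on $\mT_n$, the quantity $N_{(\tau,\alpha_\tau)}(\mT_n,\alpha_n)$ is a sum of indicators indexed by the $M_n$ eligible vertices; these indicators are \emph{not} independent, because two eligible vertices may share black vertices in their extended fringe subtrees, but any given black vertex lies in a bounded number (at most $|\tau|$) of such fringe subtrees, so the sum of covariances is $O(M_n) = O(n)$. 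Hence conditionally $\ex[N_{(\tau,\alpha_\tau)}(\mT_n,\alpha_n)\mid \mT_n] = q(\tau,\alpha_\tau) M_n + O(1)$ and $\var(N_{(\tau,\alpha_\tau)}(\mT_n,\alpha_n)\mid \mT_n) = O(n)$, so that $N_{(\tau,\alpha_\tau)}(\mT_n,\alpha_n)/n \overset{p}\to p(\tau,\alpha_\tau)$. Applying this for all finite $(\tau,\alpha_\tau)$ and invoking the frequency criterion yields the claimed quenched convergence. The main obstacle is the bookkeeping in the conditional variance bound: one must carefully argue that overlaps between extended fringe subtrees of distinct eligible vertices are sparse enough (each black vertex being reused boundedly often, with its decoration shared consistently) so that the dependence does not inflate the variance beyond $O(n)$; everything else is a routine transfer through the already-proven tree-level statement.
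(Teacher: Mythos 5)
Your proposal is correct and follows the same overall strategy as the paper: reduce the quenched statement to convergence in probability of the frequencies $N_{(\tau,\alpha_\tau)}(\mT_n,\alpha_n)/n$, use that the canonical decorations are drawn independently across black vertices with a success probability $q(\tau,\alpha_\tau)$ depending only on the finite pattern, and identify the limit as $\PP\big(f^{[h_\tau]}(\mT^\circ,\alpha^\circ)=(\tau,\alpha_\tau)\big)$. The one point where you diverge is the concentration step, and there your premise is in fact false, though harmlessly so: two distinct occurrences of the same finite marked tree $\tau$ can never share vertices. If their fringe roots coincided, the fixed position of the marked vertex inside $\tau$ would force the two marked vertices to coincide; if one fringe root were a strict ancestor of the other with overlapping fringe subtrees, one copy of $\tau$ would be a proper subtree of another copy of $\tau$, which is impossible since both have the same finite size. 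This disjointness is exactly the observation the paper flags as ``easy but crucial'': conditionally on $\mT_n$, the decoration indicators attached to distinct occurrences are genuinely independent, so the Chernoff bound applies directly. Your bounded-overlap bookkeeping (each black vertex lies in at most $\mathrm{height}(\tau)+1$ eligible extended fringe subtrees, so the conditional covariance sum is $O(n)$, and then Chebyshev) is valid and yields the same conclusion -- indeed it would survive in settings where occurrences could overlap -- but it is more laborious than necessary here, and the conditional expectation is exactly $q(\tau,\alpha_\tau)\,N_{\tau}(\mT_n)$ rather than that quantity up to $O(1)$.
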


\begin{proof}
It is easy but crucial to observe that occurrences of $\tau$ in $T$ are pairwise disjoint. Therefore, if the decoration $\alpha_T$ of $T$ is chosen at random in the mentioned canonical way, then each occurrence of $\tau$ in $T$ has a fixed probability $p_{(\tau, \alpha_\tau)}$ of also being an occurrence of $(\tau, \alpha_\tau)$, independently from the other occurrences. This probability $p_{(\tau, \alpha_\tau)}$ is precisely equal to the probability that a random canonical decoration of $\tau$ is equal to $\alpha_\tau$.

The quenched local convergence from Equation~\eqref{eq:treequenched} is equivalent to stating that for each $\tau \in \mathfrak{X}_f$
\[
	\frac{N_\tau(\mT_n)}{n} \stackrel{p}{\longrightarrow} p_\tau
\]
for $p_\tau := \PP(f^{[h_\tau]}(\mT^\circ) = \tau)$, with $h_\tau$ denoting the height of the marked vertex of $\tau$. Since occurrences of $\tau$ do not overlap, it follows by the Chernoff bounds that for each decorated version $(\tau, \alpha_\tau)$ of $\tau$ we have
\[
	\frac{N_{(\tau, \alpha_\tau)}(\mT_n, \alpha_n)}{n} \stackrel{p}{\longrightarrow} p_\tau p_{(\tau, \alpha_\tau)}.
\]
We have 
\begin{align*}
p_\tau p_{(\tau, \alpha_\tau)} &= \PP(f^{[h_\tau]}(\mT^\circ) = \tau) \PP(f^{[h_\tau]}(\mT^\circ, \alpha^\circ) 
= (\tau, \alpha_\tau) \mid  f^{[h_\tau]}(\mT^\circ) = \tau) \\
 &= \PP(f^{[h_\tau]}(\mT^\circ, \alpha^\circ) = (\tau, \alpha_\tau)).
\end{align*}
It follows that
	\begin{align*}
	\mathfrak{L}( ((\mT_n, v_n), \alpha_n) \mid (\mT_n,\alpha_n) )  \stackrel{p}{\longrightarrow} \mathfrak{L}(\mT^\circ, \alpha^\circ)
\end{align*}
by the de-rooting.

\end{proof}

\subsection{Local convergence of random graphs}

The blow-up procedure described in Section~\ref{sec:blowup} constructs a graph from a decorated tree. Consequently, we may assign to each marked decorated tree $(\tau, \alpha_\tau) \in \mathfrak{A}$ a vertex-rooted graph $G$ obtained by performing these blow-up operations on $(\tau, \alpha_\tau)$. In each step of this procedure we join a graph constructed so far to some previously unused decoration by gluing them together at specific $k$-cliques. The graph $G$ is locally finite if and only if by completing the procedure we never glue a vertex an infinite number of times to other vertices. We let $\mathfrak{A}_0 \subset \mathfrak{A}$ denote the subset of decorated marked trees where the corresponding graph is locally finite. We define a map
\[
	\Phi: \mathfrak{A} \to \mathfrak{G}, 
\]
that maps a marked decorated tree $(\tau, \alpha_\tau)$ to a neighbourhood $U_{j_0}(G)$ of the  corresponding vertex-rooted graph $G$ with
\[
	j_0 = \sup\{j \ge 0 \mid U_j(G) \text{ is finite} \} \in \{0, 1, 2, \ldots\} \cup \{\infty\}.
\]
This way, $\Phi(\tau, \alpha_\tau) = G$ if $(\tau, \alpha_\tau) \in \mathfrak{A}_0$.

\begin{proposition}
	\label{pro:annealedprep}
	The following statements hold.
	\begin{enumerate}
		\item The map $\Phi$ is continuous in a point $(\tau, \alpha_\tau)$ if and only if $(\tau, \alpha_\tau) \in \mathfrak{A}_0$. 
		\item  $\mathfrak{A}_0$ is Borel measurable and the map $\Phi$ is measurable.
		\item  $(\mT^\circ, \alpha^\circ) \in \mathfrak{A}_0$ holds almost surely.
	\end{enumerate}
\end{proposition}
\begin{proof}
	We start with the first claim. Only the vertices that are part of $k$-cliques that correspond to the black vertices of the spine of an infinite marked decorated tree $(\tau, \alpha_\tau)$ from $\mathfrak{A}$ may undergo an infinite number of gluing when forming the corresponding graph $G$. Such a vertex $v$ has infinite degree in $G$ if and only if all but a finite number of these cliques are glued together in an overlapping manner so that the overlap always contains $v$. 
		
	It follows that if $(\tau, \alpha_\tau) \in  \mathfrak{A}_0$ then for each $j \ge 0$ there exist a number $h(j) \ge 0$ such that the graph corresponding to  $f^{[h(j)]}(\tau, \alpha_\tau)$ already contains the $j$-neighbourhood $U_j(\Phi(\tau, \alpha_\tau))$. Thus, $\Phi$ is continuous in $\mathfrak{A}_0$. 
	
	On the other hand, suppose that $(\tau, \alpha_\tau) \in \mathfrak{A} \setminus \mathfrak{A}_0$, so that  $j_0$ as defined as above is finite.  $\Phi$ cannot be continuous in $(\tau, \alpha_\tau)$ since $f^{[h]}(\tau, \alpha_\tau) \to (\tau, \alpha_\tau)$ as $h \to \infty$, but clearly $\Phi(f^{[h]}(\tau, \alpha_\tau))$ has radius at least $j_0 + 1$ for all sufficiently large $h$.
This concludes the proof of the first claim.
	
	We proceed to prove the second claim. The points of continuity of a map defined on a metric space always forms a Borel measurable subset, see~\cite[Appendix M, Sec. M10]{MR1700749}. Consequently, the subset $\mathfrak{A}_0 \subset \mathfrak{A}$ is measurable.  Using the fact that any open set in $\mathfrak{G}$ is a countable union of sets of the form $\{G \in \mathfrak{G} \mid U_m(G) = H\}$, $m \ge 0$, $H$ a finite rooted graph, measurability of $\Phi$ now follows by standard arguments.
	
	We now prove the third claim. Let $u_1, u_2, \ldots$ denote the black spine vertices of $\mT^\circ$, so that $u_{i+1}$ is the father of $u_i$ for all $i \ge 1$. Denote by $\mathcal{E}_i$ the event that the spine black vertices $u_i$ and $u_{i+1}$ have decorations $\alpha^\circ(u_i)$ and $\alpha^\circ(u_{i+1})$ satisfying that the two root-cliques in them are disjoint after the gluing. We know by the discussion in the first paragraph that in order for $(\mT^\circ, \alpha^\circ)$ to be in $ \mathfrak{A}_0$ it is sufficient that $\mathcal{E}_i$ happens for infinitely many $i$. Since $\mathcal{E}_i$ has a fixed positive probability that does not depend on $i$ and $(\mathcal{E}_i)_{i \ge 1}$ is independent, it follows by the Borel-Cantelli lemma that almost surely $\mathcal{E}_i$ takes place for infinitely many $i$. Thus, $(\mT^\circ, \alpha^\circ) \in \mathfrak{A}_0$ holds almost surely. This completes the proof.
\end{proof}

\begin{corollary}
	\label{co:quenchedprep}
	The push-forward map
	\[
		\Phi^*: \mathfrak{M}(\mathfrak{A}) \to \mathfrak{M}(\mathfrak{G}), \qquad P \mapsto P \Phi^{-1}
	\]
	that maps a probability measure $P$ to the push-forward measure  $P \Phi^{-1}: B \mapsto P(\Phi^{-1}(B))$ has the following properties.
	\begin{enumerate}
		\item $\Phi^*$ is continuous at the point $\mathfrak{L}(\mT^\circ, \alpha^\circ)$.
		\item $\Phi^*$ is measurable.
	\end{enumerate}
\end{corollary}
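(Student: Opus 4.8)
The plan is to deduce both parts of Corollary~\ref{co:quenchedprep} from the corresponding statements about $\Phi$ in Proposition~\ref{pro:annealedprep}, using the standard mapping-theorem machinery for pushforwards on spaces of probability measures. Recall that $\mathfrak{M}(\mathfrak{A})$ and $\mathfrak{M}(\mathfrak{G})$ carry the weak topology, and that a Polish space is being used underneath, so everything below is legitimate.

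For the first claim, I would invoke the continuous mapping theorem in the following form: if $f : E \to E'$ is a Borel map between Polish spaces and $P \in \mathfrak{M}(E)$ is such that the set of discontinuity points $D_f$ of $f$ satisfies $P(D_f)=0$, then $Q \mapsto Q f^{-1}$ is continuous at $P$ with respect to weak convergence. (See e.g.~\cite{MR1700749}.) Here $f = \Phi$, $E = \mathfrak{A}$, $E' = \mathfrak{G}$, and $P = \mathfrak{L}(\mT^\circ, \alpha^\circ)$. By Proposition~\ref{pro:annealedprep}(1) the discontinuity set of $\Phi$ is exactly $\mathfrak{A} \setminus \mathfrak{A}_0$, and by Proposition~\ref{pro:annealedprep}(3) we have $\PP((\mT^\circ, \alpha^\circ) \in \mathfrak{A}_0) = 1$, i.e. $P(\mathfrak{A}\setminus\mathfrak{A}_0)=0$. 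Hence the hypothesis of the mapping theorem is met and $\Phi^*$ is continuous at $\mathfrak{L}(\mT^\circ, \alpha^\circ)$.

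For the second claim, I would argue that measurability of $\Phi$ (Proposition~\ref{pro:annealedprep}(2)) implies measurability of the induced pushforward map $\Phi^* : \mathfrak{M}(\mathfrak{A}) \to \mathfrak{M}(\mathfrak{G})$. Concretely, the Borel $\sigma$-algebra on $\mathfrak{M}(\mathfrak{G})$ is generated by the evaluation maps $P \mapsto P(B)$ for $B$ ranging over a generating family of Borel sets of $\mathfrak{G}$ — it suffices to take $B$ of the form $\{G \in \mathfrak{G} \mid U_m(G) = H\}$ with $m \ge 0$ and $H$ a finite rooted graph, since these generate the Borel $\sigma$-algebra of $\mathfrak{G}$. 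For such a fixed $B$, the composition $P \mapsto (P\Phi^{-1})(B) = P(\Phi^{-1}(B))$ is precisely the evaluation of $P$ at the set $\Phi^{-1}(B)$, which is Borel in $\mathfrak{A}$ because $\Phi$ is measurable; and $P \mapsto P(A)$ is measurable on $\mathfrak{M}(\mathfrak{A})$ for every fixed Borel $A \subseteq \mathfrak{A}$ by definition of the weak topology's Borel structure. Since $\Phi^*$ is measurable after composing with each generator of the target $\sigma$-algebra, $\Phi^*$ itself is measurable.

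I do not expect any genuine obstacle here; this corollary is a soft "lifting" of Proposition~\ref{pro:annealedprep} to the level of measures, and all the real content (identifying the discontinuity set with $\mathfrak{A}\setminus\mathfrak{A}_0$, and the almost-sure membership $(\mT^\circ,\alpha^\circ)\in\mathfrak{A}_0$) has already been established. The only point requiring a modicum of care is making sure the version of the continuous mapping theorem invoked is the one that gives continuity of the pushforward \emph{at a single point} $P$ under the condition $P(D_\Phi)=0$, rather than global continuity; but this is entirely standard and references such as~\cite{MR1700749} cover it. One could alternatively phrase the first part via the Portmanteau theorem: for $P_n \to P$ weakly with $P(D_\Phi) = 0$, one checks $\int F \, d(P_n\Phi^{-1}) = \int (F\circ\Phi)\,dP_n \to \int (F\circ\Phi)\,dP = \int F\,d(P\Phi^{-1})$ for bounded continuous $F$, using that $F\circ\Phi$ is bounded and $P$-a.e. continuous.
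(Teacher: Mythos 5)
Your proof is correct and follows essentially the same route as the paper: part (1) is exactly the continuous mapping theorem~\cite[Thm. 2.7]{MR1700749} applied at $\mathfrak{L}(\mT^\circ,\alpha^\circ)$ using Proposition~\ref{pro:annealedprep}(1) and (3), and part (2) is the ``standard arguments'' the paper alludes to, which you merely spell out via evaluation maps on a generating family of Borel sets. No gaps; your write-up just makes explicit the details the paper leaves implicit.
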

\begin{proof}
	Proposition~\ref{pro:annealedprep} allows us to apply the  continuous mapping theorem~\cite[Thm. 2.7]{MR1700749}, yielding directly that $\Phi^*$ is continuous at $\mathfrak{L}(\mT^\circ, \alpha^\circ)$. Measurability of the map $\Phi^*$ follows by standard arguments.
\end{proof}

We are now ready to prove Theorem~\ref{te:local}.

\begin{proof}[Proof of Theorem~\ref{te:local}]
Recall that by Proposition~\ref{pro:quencheddeco} we have
\begin{align*}
	\mathfrak{L}( ((\mT_n, v_n), \alpha_n) \mid (\mT_n,\alpha_n) )  \stackrel{p}{\longrightarrow} \mathfrak{L}(\mT^\circ, \alpha^\circ)
\end{align*}
as $n \to \infty$. Corollary~\ref{co:quenchedprep} allows us to apply the  continuous mapping theorem~\cite[Thm. 2.7]{MR1700749}, yielding
\begin{align*}
		\Phi^*\mathfrak{L}( ((\mT_n, v_n), \alpha_n) \mid (\mT_n,\alpha_n) )  \stackrel{p}{\longrightarrow} \Phi^*\mathfrak{L}(\mT^\circ, \alpha^\circ).
\end{align*}
Clearly $\Phi^*\mathfrak{L}(\mT^\circ, \alpha^\circ)$ is equal to the law $\mathfrak{L}(\hat{\mG})$ of the infinite random rooted graph $\hat{\mG} := \Phi(\mT^\circ, \alpha^\circ)$. (Of course, $\hat{\mG}$ depends implicitly on $k$.)
Recall that $\mG_{k,n}^{(k)}$ denotes a uniformly selected labelled $k$-clique rooted $k$-connected chordal graph with tree-width at most $t$. 
 The push-forward $\Phi^*\mathfrak{L}( ((\mT_n, v_n), \alpha_n) \mid (\mT_n,\alpha_n) )$ is distributed like the conditional law $\mathfrak{L}( (\mG_{k,n}^{(k)}, x_n^\circ) \mid \mG_{k,n}^{(k)})$, with $x_n^\circ$ denoting a uniformly selected non-root vertex of $\mG_{k,n}^{(k)})$. If we select a vertex $x_{n}$ uniformly at random among the $n+k$ vertices of $\mG_{k,n}^{(k)})$, then this vertex is with high probability not one of the $k$ vertices contained in the root-clique. Consequently, it follows that
 \begin{align*}
 	\mathfrak{L}( (\mG_{k,n}^{(k)}, x_{n}) \mid \mG_{k,n}^{(k)}) \stackrel{p}{\longrightarrow}  \mathfrak{L}(\hat{\mG}).
 \end{align*}
By Lemma~\ref{le:deroot} it follows that
 \begin{align*}
	\mathfrak{L}( (\mG_{k,n}, y_{n}) \mid \mG_{k,n}^{(k)}) \stackrel{p}{\longrightarrow}  \mathfrak{L}(\hat{\mG}),
\end{align*}
with $y_n$ denoting a uniformly selected vertex of $\mG_{k,n}$. This completes the proof.
\end{proof}

\section{Proof of the scaling limit}

\subsection{Scaling limit of a coupled random tree}

We are going to verify the following limit of the underlying tree $\mT_n$, which is a special case of a conditioned sesqui-type tree. That is, a tree with two types, only one of which produces offspring.

\begin{proposition}
	\label{pro:scaling limit}
	 There is a constant 
	 \[
	 	\kappa_{\mathrm{tree}} = \frac{\sqrt{\mathbb{V}[\xi] \mathbb{E}[\zeta]}}{2}
	 \]
	that depends on $k$ and $t$ such that
	\[
	\left( \mT_{n}, \kappa_{\mathrm{tree}}n^{-1/2}d_{\mT_{n}}, \mu_{\mT_{n}} \right) \stackrel{d}{\longrightarrow}
	\left( \mathscr{T}_e, d_{\mathscr{T}_e}, \mu_{\mathscr{T}_e} \right)
	\]
	in the Gromov--Hausdorff--Prokhorov sense as $n \to \infty$. Here $\mu_{\mT_{n}}$ denotes the uniform measure on the white vertices of $\mT_n$. Furthermore, there are constants $C,c>0$ such that for all $n$ and all  $x \ge 0$ the height $\mathrm{H}(\mT_n)$ satisfies
	\[
		\PP(\mathrm{H}(\mT_n) \ge x) \le C \exp(-c x^2 / n).	
	\]
\end{proposition}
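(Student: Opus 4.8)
The plan is to deduce Proposition~\ref{pro:scaling limit} from known scaling-limit results for conditioned two-type Bienaymé--Galton--Watson trees, after reducing to a suitable one-type reformulation. The tree $\mathsf{T}$ is a sesqui-type tree: white vertices are sterile and each black vertex gets an independent copy of the mixed offspring $(\xi, \zeta)$, with $\mathbb{E}[\xi]=1$ (criticality of the black subtree) and finite exponential moments for $(\xi,\zeta)$ by Proposition~\ref{pro:offspring}. The conditioning is on the number of white vertices being $n$. The natural route is: first, isolate the black subtree $\mathsf{T}_n^{\bullet}$, which — after the conditioning — is (by the cycle-lemma computation already carried out in the proof of Proposition~\ref{pro:degrees}) very close to a critical Bienaymé--Galton--Watson tree with offspring law $\xi$ conditioned on having a number of vertices $\ell$ concentrated around $n/\mathbb{E}[\zeta]$; second, quote the invariance principle of Aldous (and the subsequent refinements cited in the introduction) that a critical finite-variance BGW tree conditioned to have $m$ vertices, rescaled by $m^{-1/2}$, converges GHP to the Brownian CRT with an explicit constant $\tfrac{1}{2}\sqrt{\mathbb{V}[\xi]}$; third, transfer this to $\mathsf{T}_n$ by showing the white vertices are spread along the black skeleton densely and uniformly enough that adding them and remetrizing changes the GHP-rescaled metric space by $o_p(1)$.

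The key steps, in order, are as follows. \textbf{(1)} Condition on $\#_1 \mathsf{T}_n = \ell$; by Equation~\eqref{eq:stretch} we may restrict to $\ell \in I = \{\ell : |\ell - n/\mathbb{E}[\zeta]| < n^{1/2+\delta}\}$ up to an event of probability $\exp(-\Theta(n^\delta))$. \textbf{(2)} Observe that, conditionally on the black subtree being a fixed plane tree with $\ell$ black vertices and on $\#_2\mathsf{T}_n = n$, the distribution of the black subtree is that of a $\xi$-BGW tree conditioned on $\ell$ vertices \emph{and} on $\sum_{i=1}^\ell \zeta_i = n$; the extra conditioning on the white count is asymptotically harmless because, by the local central limit theorem / the medium-deviation estimates already used, $\sum \zeta_i$ concentrates at scale $\sqrt{\ell}$ around $\ell\mathbb{E}[\zeta]\approx n$, so a standard absolute-continuity (Skorokhod-coupling) argument shows $\mathsf{T}_n^{\bullet}$ is within $o_p(1)$ in GHP (after $n^{-1/2}$-rescaling) of a plain $\xi$-BGW tree conditioned on $\lceil n/\mathbb{E}[\zeta]\rceil$ vertices. \textbf{(3)} Apply Aldous's theorem (in the GHP form, e.g.\ via the cited universality references) to get
\[
  \left( \mathsf{T}_n^{\bullet}, \tfrac{\sqrt{\mathbb{V}[\xi]}}{2}\,(n/\mathbb{E}[\zeta])^{-1/2} d, \mu^{\bullet} \right) \stackrel{d}{\longrightarrow} \left( \mathscr{T}_e, d_{\mathscr{T}_e}, \mu_{\mathscr{T}_e}\right),
\]
which rewrites as rescaling by $\tfrac12\sqrt{\mathbb{V}[\xi]\mathbb{E}[\zeta]}\,n^{-1/2} = \kappa_{\mathrm{tree}} n^{-1/2}$. \textbf{(4)} Pass from $\mathsf{T}_n^{\bullet}$ to $\mathsf{T}_n$: each white vertex is a leaf hanging off its black parent, so the graph distance in $\mathsf{T}_n$ between two white vertices differs from the distance between their black parents by at most $2$; and the uniform measure on white vertices is close in Prokhorov distance to the pushforward of the Aldous limit measure because Proposition~\ref{pro:degrees}(2) (equivalently the concentration of $B_d(\mathsf{T}_n)$) shows the number of white children of a black vertex is, in a suitable averaged sense, asymptotically proportional — this needs the measure-comparison lemma for GHP convergence (adding a bounded number of pendant vertices per point and reweighting by a factor converging uniformly to a constant does not change the GHP limit). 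For the height tail bound, I would apply the Gaussian tail bound for the height of conditioned BGW trees — precisely the main result of~\cite{addarioberry2022random} quoted in the introduction — to $\mathsf{T}_n^{\bullet}$ on the event $\ell \in I$, note $\mathrm{H}(\mathsf{T}_n) \le \mathrm{H}(\mathsf{T}_n^{\bullet}) + 1$, and absorb the exceptional event $\{\#_1\mathsf{T}_n \notin I\}$ of probability $\exp(-\Theta(n^\delta))$ into the constants (adjusting $C$; for $x \gtrsim n$ the bound is trivial since the tree has $n$ vertices, and for $x \lesssim n^{1/2+\delta}$ one uses the conditioned-BGW estimate directly, while the stretch estimate handles the regime in between).

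\textbf{The main obstacle} I anticipate is step (4) — cleanly justifying that the presence of the white (sterile) vertices does not perturb the scaling limit. Two subtleties must be handled: first, the measure on $\mathsf{T}_n$ lives on white vertices, not black ones, so one must compare $\mu_{\mathsf{T}_n}$ with the image under "parent map" of the limiting mass on $\mathsf{T}_n^{\bullet}$, and control the randomness in how many white children each black vertex carries (this is exactly what Proposition~\ref{pro:degrees} is set up to provide, but making the comparison quantitative in the GHP/Prokhorov metric requires care, e.g.\ a concentration statement for $\sum_{v\in B} (\text{white children of }v)$ over all subtrees $B$, which follows from the exponential-moment bound on $\zeta$); second, one must confirm the correct constant — the factor $\mathbb{E}[\zeta]$ enters purely through the change of "time scale" relating $\ell$ to $n$, and the factor $\tfrac12\sqrt{\mathbb{V}[\xi]}$ is the standard Aldous constant for the $\xi$-BGW tree, so $\kappa_{\mathrm{tree}} = \tfrac12\sqrt{\mathbb{V}[\xi]\mathbb{E}[\zeta]}$; checking this is bookkeeping but easy to get wrong by a square root. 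An alternative, possibly cleaner, route for the whole proposition would be to cite directly an existing scaling-limit theorem for conditioned multitype (or sesqui-type) BGW trees with exponential moments — if such a black-box result is available with the metric measure structure we need, steps (2)--(4) collapse into verifying its hypotheses via Proposition~\ref{pro:offspring}, and similarly the height tail bound may be available in that literature.
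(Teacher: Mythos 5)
Your overall architecture (isolate the black subtree, get its GHP limit with constant $\tfrac12\sqrt{\mathbb{V}[\xi]}$, then correct the time scale by $\mathbb{E}[\zeta]$ and transfer the measure to the white vertices) matches the paper, and your constant $\kappa_{\mathrm{tree}}=\tfrac12\sqrt{\mathbb{V}[\xi]\mathbb{E}[\zeta]}$ is right. However, your step (2) contains a genuine gap. You claim that, given $\#_1\mT_n=\ell$ and $\#_2\mT_n=n$, the black subtree is within $o_p(1)$ of a plain $\xi$-BGW tree conditioned on $\ell$ vertices, "by a standard absolute-continuity (Skorokhod-coupling) argument", because $\sum_i\zeta_i$ concentrates around $\ell\,\mathbb{E}[\zeta]\approx n$. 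Concentration is not enough: the event $\{\sum_{i=1}^{\ell}\zeta_i=n\}$ has probability of order $\ell^{-1/2}$, so you are conditioning on an asymptotically singular event, and since $\xi$ and $\zeta$ are \emph{dependent} (both are statistics of the same Boltzmann $\mathrm{SET}(\mathcal{G}_{k+1}^{(k)})$ object), this conditioning genuinely tilts the joint law of the black outdegrees. Concretely, the Radon--Nikodym derivative of the conditioned black tree with respect to the unconditioned one is proportional to $\PP\bigl(\sum_i\zeta_i=n \mid \xi_i=d_i(T)\ \forall i\bigr)$, whose centring $\sum_i\mathbb{E}[\zeta\mid\xi=d_i(T)]$ fluctuates on the same $\sqrt{\ell}$ scale as the local-CLT window; it therefore does not converge to $1$, and the two laws are contiguous but not asymptotically equivalent. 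So the reduction to Aldous's theorem for a single-type conditioned BGW tree, as stated, does not go through, and the height-tail argument inherits the same problem since it is applied to the same misidentified object. The paper sidesteps this entirely with a different key observation: conditionally on the two-dimensional outdegree profile, the black subtree is \emph{uniform} among trees with the induced one-dimensional degree profile, so $\mT_n$'s black subtree is a mixture of uniform trees with prescribed degree sequences; one then applies the prescribed-degree-sequence results (Broutin--Marckert for the GHP limit, and the height tail bound of Addario-Berry et al.\ uniformly in the profile), with the hypotheses (profile concentration, max degree $O(\log n)$, variance convergence) supplied exactly by Proposition~\ref{pro:degrees}. Your route could perhaps be repaired by a bivariate local CLT decoupling argument, but that is a substantive missing piece, not a standard coupling.

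Two smaller points. For the measure transfer in your step (4), the quantitative statement you say "requires care" is precisely what the paper proves as Proposition~\ref{pro:coupling} (a coupling of a uniform black vertex with the black parent of a uniform white vertex whose DFS positions differ by $O(n^{3/4})$), combined with convergence of the height/contour/\L{}ukasiewicz processes to control distances between DFS-close vertices; your "pendant vertices plus reweighting" sketch does not by itself control the Prokhorov distance. And in the tail bound, your claim that the regime $x\gtrsim n$ is "trivial since the tree has $n$ vertices" is not correct: $\mT_n$ has $n$ \emph{white} vertices, but $\#_1\mT_n$ (and hence the height) is not deterministically bounded by $n$, so the exceptional event $\{\#_1\mT_n\notin I\}$ of probability $\exp(-\Theta(n^{\delta}))$ cannot simply be absorbed into $C\exp(-cx^2/n)$ when $x\gg n^{(1+\delta)/2}$; this regime needs a separate (e.g.\ exponential-tail on $\#_1\mT_n$) argument.
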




\begin{proof}
	Let us call the outdegree profile of a rooted tree the family $(N_d)_{d \ge 0}$ with $N_d$ denoting the number of vertices with outdegree $d \ge 0$. Conversely, we can fix a profile and uniformly sample a rooted tree with that profile.
	
	Given an integer $1 \le \ell \le n$, the black subtree of the  conditioned tree $(\mT_n \mid \#_1\mT_n=\ell)$ is a mixture of uniform random $\ell$-vertex rooted trees with given outdegree profile. This is because if we condition $(\mT_n \mid \#_1\mT_n=\ell)$ on having a $2$-dimensional outdegree profile $(N_{(a,b)})_{a, b \ge 0}$, then the black subtree is uniformly distributed among all rooted trees with degree profile $( \sum_{b \ge 0} N_{(a,b)})_{ a \ge 0}$.\footnote{The authors are grateful to Louigi Addario-Berry for pointing out this fact.}
	
	This allows us to apply~\cite[Thm. 3]{addarioberry2022random} to the black subtree of $(\mT_n \mid \#_1\mT_n=\ell)$.  Any vertex of $\mT_n$ is at graph distance at most $1$ from a black vertex, hence~\cite[Thm. 3]{addarioberry2022random} in fact yields that there are constants $C,c>0$ that do not depend on $\ell$ or $n$  or $x \ge 0$ such that
	\[
		\mathbb{P}\left( (\mathrm{H}(\mT_n) \mid \#_1\mT_n=\ell) \ge x\right) \le C \exp(-cx^2 / \ell).
	\]
	This immediately yields
	\begin{align*}
		\PP(\mathrm{H}(\mT_n) \ge x) &= \sum_{\ell=1}^n \mathbb{P}(\#_1\mT_n=\ell)\mathbb{P}\left( (\mathrm{H}(\mT_n) \mid \#_1\mT_n=\ell) \ge x\right) \\
		&\le C \sum_{\ell=1}^n \mathbb{P}(\#_1\mT_n=\ell) \exp(-cx^2/\ell) \\
		&\le C \exp(-c x^2 / n).
	\end{align*}
	
	This concludes the proof of the tail-bounds for the height.  For the scaling limit, Proposition~\ref{pro:degrees} ensures that we may apply~\cite[Thm. 1]{MR3188597} to the black subtree $\mB_n$ of $\mT_n$, yielding
	\[
		\left(\mB_n,\frac{\sqrt{\mathbb{V}[\xi]}}{2} \sqrt{\frac{\mathbb{E}[\zeta]}{n}} d_{\mB_n}, \mu_{\mB_n}\right) \overset{d}\longrightarrow \left( \mathscr{T}_e, d_{\mathscr{T}_e}, \mu_{\mathscr{T}_e} \right).
	\]
	Here $\mu_{\mB_n}$ denotes the uniform probability measure on the set of vertices of $\mB_n$. Specifically, we obtain this limit  by using Skorokhod's representation theorem analogously as for monotype trees in~\cite[Sec. 6]{MR3188597}. That is, we again view $\mB_n$ as a mixture of trees with given degree sequences and  apply Skorokhod's representation theorem to Proposition~\ref{pro:degrees} to ensure the existence of a coupling so that pathwise the degree sequences satisfy the conditions of~\cite[Thm. 1]{MR3188597}. Hence we may apply~\cite[Thm. 1]{MR3188597} to obtain pathwise distributional convergence in the Gromov--Hausdorff--Prokhorov sense with the same rescaling factor for each path, yielding the scaling limit for $\mB_n$.
	
	To be precise, \cite[Thm. 1]{MR3188597} states Gromov--Hausdorff convergence, however it is clear that the proof given there extends to the Gromov--Hausdorff--Prokhorov metric. One way to see this is to combine the convergence of the contour function~\cite[Thm. 3]{MR3188597} with the fact~\cite[Prop. 2.10]{MR3201925} that the construction of trees from contour functions is continuous with respect to the Gromov--Hausdorff--Prokhorov metric.
	
	Any vertex of $\mT_n$ is at graph distance at most $1$ from $\mB_n$, hence it is clear that the Hausdorff distance between $\mB_n$ and $\mT_n$ tends to zero after rescaling distances by $n^{-1/2}$. It remains to bound the Prokhorov distance between $\mu_{\mB_n}$ (interpreted as a measure on the entire tree $\mT_n$ ) and the measure $\mu_{\mT_{n}}$, both after rescaling distances by $\kappa_{\mathrm{tree}} n^{-1/2}$.
	
	Proposition~\ref{pro:coupling} ensures that there exists a set $\mathcal{E}$ of trees so that 
	\begin{align}
		\label{eq:bcc}
	 | 1 - \mathbb{P}(\mT_n \in \mathcal{E}) | \le \exp(- \Theta(n^{2/3}))
	\end{align}
	and such that whenever $\mT_n \in \mathcal{E}$ we may   couple a uniformly selected black vertex $v_n$ of $\mT_n$ with the unique black parent $v_n'$ of a uniformly selected white vertex of $\mT_n$ such that the positions $L_n$ and $L_n'$  of $v_n$ and $v_n'$ in the depth-first-search order of $\mB_n$ satisfy 
	\begin{align}
		\label{eq:n34}
		|L_n - L_n'| \le n^{3/4}.
	\end{align}

	By applying \cite[Thm. 3]{MR3188597} to $\mB_n$ in the same way as we applied \cite[Thm. 1]{MR3188597} before, we know that the height process and the contour process of $\mB_n$ admit the same Brownian excursion of duration $1$ as distributional scaling limit after multiplying  height by $\kappa_{\mathrm{tree}} / \sqrt{n}$, and time by $\mathbb{E}[\zeta]/n$  and $\mathbb{E}[\zeta]/(2n)$, respectively, and jointly the \L{}ukasiewicz path converges in distribution towards the same Brownian excursion after rescaling height by $\sqrt{\frac{\mathbb{E}[\zeta]}{\mathbb{V}[\xi]}} \frac{1}{ \sqrt{n}}$ and time by $\mathbb{E}[\zeta]/n$. Skorokhod's representation theorem allows us to assume that this convergence holds almost surely.  The graph distance between arbitrary vertices $v$ and $v'$ in the tree $\mB_n$ is given by
	\begin{align}
		\label{eq:lca}
		d_{\mB_n}(v, v') = h_{\mB_n}(v) + h_{\mB_n}(v') - 2 h_{\mB_n}(\mathrm{lca}(v, v')),
	\end{align}
	with $\mathrm{lca}(v, v')$ denoting the lowest common ancestor of $v$ and $v$, and $h_{\mB_n}(\cdot)$ is referring to the height of a vertex. 	Since the Brownian excursion is continuous,  Equation~\eqref{eq:lca} and standard arguments imply that almost surely
	\begin{align}
		\label{eq:comeoncomeon}
		n^{-1/2} \sup_{v,v'} d_{\mB_n}(v,v')  \to 0
	\end{align}
	with the indices $v$ and $v'$ ranging over all vertices in $\mB_n$ whose positions in the depth-first-search order differ by at most $n^{3/4}$.
	
	Using Equation~\eqref{eq:bcc} and the Borel--Cantelli lemma, it follows from~\eqref{eq:comeoncomeon} that almost surely $\mT_n$ has the property
	\begin{align}
		n^{-1/2} \sup_{v_n,v_n'} d_{\mB_n}(v_n,v'_n)  \to 0.
	\end{align}
	By~\cite[Cor. 7.5.2]{MR3024835}, this implies that the Prokhorov distance between $\mu_{\mB_n}$ and $\mu_{\mT_n}$ converges almost surely to zero after rescaling distances by $\kappa_{\mathrm{tree}} n^{-1/2}$. This completes the proof.
\end{proof}

\subsection{The size-biased tree}

We construct the tree $\mT^{(\ell)}$ with spine length $\ell$ in the following way.
Let $u_0, u_1, \dots, u_{\ell}$ denote a sequence of black vertices, which form the spine
of $\mT^{(\ell)}$. The tip $u_{\ell}$ of the spine receives offspring according to
$(\xi^\circ, \zeta^\circ)$ and a uniformly selected white child is marked. For each 
$0\leq i<\ell$ the vertex $u_i$ receives offspring according to an independent copy of
$(\xi^\bullet, \zeta^\bullet)$ and we identify $u_{i+1}$ with a uniformly selected black child.
The construction of $\mT^{(\ell)}$ is finalized by identifying each black non-spine vertex 
with the root of an independent copy of the unconditioned tree $\mT$.

Again, a decorated tree $(\mT^{(\ell)}, \alpha^{(\ell)})$ can be constructed by assigning
random decorations $\alpha^{(\ell)}(v)$ to each $v\in\mT^{(\ell)}$ in the canonical way.

\begin{lemma}\label{lemma:size_biased}
	For any finite marked tree $(\tau, \alpha_\tau) \in \mathfrak{A}_f$ with the height of the marked vertex equal to $\ell$ it holds that
	\[
		\PP((\mT^{(\ell)}, \alpha^{(\ell)}) = (\tau, \alpha_\tau)) =  \PP( (\mT, \alpha) = (\tilde{\tau}, \alpha_\tau)  ) / \ex[\zeta],
	\]
	with $\tilde{\tau}$ denoting the unmarked tree obtained from $\tau$ by forgetting which vertex is marked.
\end{lemma}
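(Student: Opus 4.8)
The strategy is to set up a bijection between marked decorated trees $(\tau, \alpha_\tau)$ with marked-vertex height $\ell$ and pairs consisting of an unmarked decorated tree $(\tilde\tau, \alpha_\tau)$ together with a choice of a white vertex at height $\ell$, and then to compare the two probability weights term by term along this bijection. The first step is to recall the explicit descriptions of both objects: the tree $\mT^{(\ell)}$ built from a spine $u_0, \ldots, u_\ell$, where $u_\ell$ has offspring law $(\xi^\circ, \zeta^\circ)$ with a uniformly marked white child, each $u_i$ for $i < \ell$ has offspring law $(\xi^\bullet, \zeta^\bullet)$ with a uniformly chosen distinguished black child identified with $u_{i+1}$, and all black non-spine vertices spawn independent copies of $\mT$; and the decoration $\alpha^{(\ell)}$ assigned canonically. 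Meanwhile $(\mT,\alpha)$ is the plain two-type Bienaym\'e--Galton--Watson tree with offspring $(\xi,\zeta)$ and canonical decoration.

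\textbf{Key computation.} Given the target $(\tau, \alpha_\tau)$, write $u_0^*, \ldots, u_\ell^*$ for the ancestral line of the marked vertex in $\tau$ (with $u_0^* = $ root and $u_\ell^* = $ parent of the marked white vertex — or rather the marked vertex sits as a white child of $u_\ell^*$). Along this spine, at vertex $u_i^*$ for $i<\ell$, the event $\{(\xi^\bullet,\zeta^\bullet) = (a_i, b_i)\}$ contributes $\PP((\xi,\zeta) = (a_i,b_i))\, a_i$, and then identifying $u_{i+1}^*$ with a \emph{uniformly chosen} black child among the $a_i$ contributes a factor $1/a_i$; the product of these two factors is exactly $\PP((\xi,\zeta) = (a_i,b_i))$, i.e. the same weight the vertex $u_i^*$ would receive in the plain tree $\mT$. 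At the tip $u_\ell^*$, the event $\{(\xi^\circ,\zeta^\circ)=(a_\ell,b_\ell)\}$ contributes $\PP((\xi,\zeta)=(a_\ell,b_\ell))\, b_\ell / \ex[\zeta]$, and marking the white child uniformly contributes $1/b_\ell$; the product is $\PP((\xi,\zeta)=(a_\ell,b_\ell))/\ex[\zeta]$, which is the plain-tree weight of $u_\ell^*$ divided by $\ex[\zeta]$. Every black non-spine vertex, being the root of an independent copy of $\mT$, contributes exactly its plain-tree weight, and finally the canonical decoration $\alpha^{(\ell)}$ is drawn by the identical rule as $\alpha$, so the decoration weights match factor-for-factor once the underlying trees agree. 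Multiplying everything together yields $\PP((\mT^{(\ell)},\alpha^{(\ell)}) = (\tau,\alpha_\tau)) = \PP((\mT,\alpha) = (\tilde\tau, \alpha_\tau))/\ex[\zeta]$, which is the claim.

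\textbf{Main obstacle.} The only genuinely delicate point is bookkeeping the combinatorial factors coming from the \emph{ordering} conventions: $\mT$ is a plane tree, so black children of a vertex carry a total order (and white children a separate total order), and one must check that "uniformly selected black child" on the spine of $\mT^{(\ell)}$ together with "uniformly marked white child" at the tip precisely accounts for all the ordered positions, so that the $a_i$ and $b_\ell$ in the denominators cancel the size-biasing factors \emph{exactly} and no residual symmetry factor survives. Concretely, one has to be careful that $\tilde\tau$ genuinely forgets only the mark and retains the plane structure, so that a decorated version of $\tilde\tau$ with a white vertex at height $\ell$ designated corresponds to exactly one marked decorated tree $(\tau,\alpha_\tau)$. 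Once this correspondence is pinned down, the identity follows by the telescoping cancellation described above; I would present it as: spine-internal factors $\PP\cdot a_i \cdot (1/a_i)$, tip factor $\PP \cdot b_\ell/\ex[\zeta] \cdot (1/b_\ell)$, off-spine and decoration factors identical, product equals plain weight over $\ex[\zeta]$.
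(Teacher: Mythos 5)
Your proposal is correct and follows essentially the same route as the paper's own proof: the spine-by-spine cancellation of the size-biasing factors $a_i$ and $b_\ell$ against the uniform choices of black child and marked white child, with off-spine subtrees and the canonical decoration contributing identical weights on both sides. No substantive difference to report.
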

\begin{proof}
    For any $\tau\in\mathfrak{X}_f$ with a marked
    white vertex at height $\ell+1$, denote by $v_{\ell}$ the tip of the spine (the parent of
    the marked white vertex) and by  $v_0, \dots, v_{\ell-1}$ the remaining vertices of the
    spine (the ancestors of $v_{\ell}$). Denote also the black and white outdegrees of its vertices
    by $d^{\bullet}_{\tau}(v)$ and $d^{\circ}_{\tau}(v)$, respectively.
    Consider first the undecorated setting. We have that
    \begin{align*}
        \PP(\mT^{(\ell)} = \tau) &= \left( \prod_{v \in \tau\setminus\{v_0, \dots, v_{\ell}\}} 
            \PP((\xi, \zeta) =  (d^{\bullet}_{\tau}(v), d^{\circ}_{\tau}(v))) \right) \\
            &\qquad \cdot \left( \prod_{v \in \{v_0,\dots, v_{\ell-1}\}} \PP((\xi^{\bullet}, 
            \zeta^{\bullet}) =  (d^{\bullet}_{\tau}(v), d^{\circ}_{\tau}(v)))\frac{1}{d^{\bullet}_{\tau}(v)} \right) \\
            &\qquad \cdot \PP((\xi^{\circ}, \zeta^{\circ}) =  (d^{\bullet}_{\tau}(v_{\ell}), d^{\circ}_{\tau}(v_{\ell})))
            \frac{1}{d^{\circ}_{\tau}(v_{\ell})}\\
        &= \left( \prod_{v \in \tau\setminus\{v_0, \dots, v_{\ell}\}} 
            \PP((\xi, \zeta) =  (d^{\bullet}_{\tau}(v), d^{\circ}_{\tau}(v))) \right) \\
            &\qquad \cdot \left( \prod_{v \in \{v_0, \dots, v_{\ell-1}\}} \PP((\xi, \zeta) = 
            (d^{\bullet}_{\tau}(v), d^{\circ}_{\tau}(v))) \right) \\
            &\qquad \cdot \PP((\xi, \zeta) =  (d^{\bullet}_{\tau}(v_{\ell}), d^{\circ}_{\tau}(v_{\ell})))
            \frac{1}{\ex[\zeta]}\\
        &= \left( \prod_{v \in \tau}  \PP((\xi, \zeta) =  (d^{\bullet}_{\tau}(v), 
            d^{\circ}_{\tau}(v))) \right) \frac{1}{\ex[\zeta]}\\
        &= \PP(\mT = \tilde{\tau}) \frac{1}{\ex[\zeta]}.
    \end{align*}
    Then, taking into account the decorations we conclude that
    \begin{align*}
        \PP((\mT^{(\ell)}, \alpha^{(\ell)}) = (\tau, \alpha_{\tau})) &= \PP((\mT^{(\ell)}, \alpha^{(\ell)}) = 
            (\tau, \alpha_{\tau}) \mid \mT^{(\ell)} = \tau)   \cdot \PP(\mT^{(\ell)} = \tau) \\
        &= \PP((\mT, \alpha) =  (\tilde{\tau}, \alpha_{\tau}) \mid \mT = 
            \tilde{\tau})   \cdot \PP(\mT = \tilde{\tau})/ \ex[\zeta] \\
        &= \PP((\mT, \alpha) =  (\tilde{\tau}, \alpha_{\tau}))/ \ex[\zeta].
    \end{align*}
\end{proof}

\begin{remark}
    The same construction can be extended to $\ell=\infty$, obtaining the tree 
    $\mT^{(\infty)}$. Notice that here, as opposed to  $\mT^{\circ}$, the spine is growing
    downwards and thus there is no vertex with offspring distributed as $(\xi^{\circ}, \zeta^{\circ})$
    and no marked vertex. It is easy to adapt the proof of the monotype case \cite{JANSON2012}
    to see that $\mT_n \stackrel{d}{\longrightarrow}\mT^{(\infty)}$ in the local topology
    with respect to the root.
\end{remark}

By Skorokhod's representation theorem, we may assume without loss of generality that
$\mT_n\stackrel{\text{a.s.}}{\longrightarrow}\mT^{(\infty)}$. Assigning decorations on both
sides in the canonical way immediately implies a corresponding convergence of $(\mT_n,
\alpha_n)$ towards $(\mT^{(\infty)}, \alpha^{(\infty)})$. Arguing analogously as for the 
random marked vertex case, it follows that $\mG_{k, n}^{(k)}\stackrel{d}{\longrightarrow}
\mG_{k, (\infty)}^{(k)}$, where $\mG_{k, (\infty)}^{(k)}=\Phi(\mT^{(\infty)}, \alpha^{(\infty)})$,
in the local topology with respect to, say, the first vertex in the root $k$-clique.

\subsection{Comparison of different metrics}

\begin{lemma}\label{lem:white_bound}
    The maximum white outdegree in $\mT_n$ is $\mathcal{O}_p(\log n)$.
\end{lemma}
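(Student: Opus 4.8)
The plan is to control the maximum white outdegree via a union bound over all black vertices of $\mT_n$, using the exponential moments of $\zeta$ together with the subexponential tail for $\#_1 \mT_n$ established in the proof of Proposition~\ref{pro:degrees}. Recall that the white outdegree of a black vertex is distributed like $\zeta$, and that in the depth-first-search construction of the black subtree of $\mT$, the $i$th black vertex has white outdegree $\zeta_i$, where $(\xi_i, \zeta_i)$ are i.i.d.\ copies of $(\xi, \zeta)$. So if $M_n$ denotes the maximum white outdegree in $\mT_n$, I want to show $\PP(M_n \ge C \log n) \to 0$ for a suitable constant $C>0$; since the trivial lower bound $M_n = \Omega_p(1)$ is immediate (the root has positive probability of positive white outdegree), this yields $M_n = \mathcal{O}_p(\log n)$.

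First I would recall from~\eqref{eq:stretch} (in the proof of Proposition~\ref{pro:degrees}) that for any fixed $0 < \delta < 1/4$, with $I = \{\ell : |\ell - n/\ex[\zeta]| < n^{1/2+\delta}\}$, we have $\PP(\#_1 \mT_n \notin I) \le \exp(-\Theta(n^\delta))$. Then I would write, conditioning on $\#_1 \mT_n = \ell \in I$ and using the cycle-lemma identity from the proof of Proposition~\ref{pro:degrees},
\[
	\PP(M_n \ge m,\ \#_1 \mT_n = \ell) \le O(n^{3/2}) \PP\Bigl(\max_{1 \le i \le \ell} \zeta_i \ge m,\ \textstyle\sum_{i=1}^\ell \zeta_i = n\Bigr) \le O(n^{3/2}) \ell\, \PP(\zeta \ge m),
\]
where the last step drops the sum constraint and applies a union bound over the $\ell$ coordinates. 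Summing over $\ell \in I$ and using $\ell = O(n)$ gives $\PP(M_n \ge m,\ \#_1 \mT_n \in I) \le O(n^{7/2}) \PP(\zeta \ge m)$. Since $\zeta$ has finite exponential moments by Proposition~\ref{pro:offspring}, $\PP(\zeta \ge m) \le \exp(-\Theta(m))$, so choosing $m = C \log n$ with $C$ large enough makes this $O(n^{7/2}) \exp(-\Theta(C \log n)) = o(1)$. Combined with $\PP(\#_1 \mT_n \notin I) = o(1)$, this gives $\PP(M_n \ge C \log n) \to 0$, as desired.

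I do not expect a real obstacle here: this is essentially the same argument already carried out for the quantity ``$B_d(\mT_n) > 0$ for some $d \ge d_0$'' in the proof of Proposition~\ref{pro:degrees}, applied now to the white rather than the black outdegree. The only minor point to be careful about is that the white outdegrees $\zeta_i$ in the DFS construction must be indexed by the black vertices (there are $\#_1 \mT_n$ of them), which is exactly the setting of that earlier computation, so the bookkeeping transfers verbatim with $\mathbf{1}_{\xi_i \ge d_0}$ replaced by $\mathbf{1}_{\zeta_i \ge m}$. One could alternatively phrase the whole thing more cheaply: $M_n \le \max_{1 \le i \le \#_1 \mT_n} \zeta_i$ where the $\zeta_i$ are i.i.d., and on the high-probability event $\#_1 \mT_n \le n$ a union bound over at most $n$ i.i.d.\ copies of a random variable with exponential tails gives a maximum of order $\log n$ with high probability — but going through the conditioned model as above keeps the exposition uniform with the preceding propositions.
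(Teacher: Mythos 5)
Your proof is correct and follows essentially the same route as the paper's: transfer from the conditioned tree to the unconditioned model via the polynomial factor $O(n^{3/2})$ coming from $\PP(\#_2\mT=n)$, then union-bound the i.i.d.\ white offspring counts $\zeta_i$ over the (at most polynomially many) black vertices and use the exponential tail of $\zeta$ with $C$ large. The only cosmetic difference is that you restrict to $\#_1\mT_n\in I$ via~\eqref{eq:stretch} while the paper sums directly over the number of black vertices, which changes nothing of substance.
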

\begin{proof}
    Let $C>0$ be a large enough constant. Then, using Proposition \ref{pro:offspring}
    \begin{align*}
        \PP(\exists& v\in\mT_n \text{ with } d^{\circ}(v) > C\log n )\\
        &= \PP(\exists v\in\mT \text{ with } d^{\circ}(v) > C\log n \mid \#_2\mT = n)\\
        &= \frac{\PP(\exists v\in\mT \text{ with } d^{\circ}(v) > C\log n, \#_2\mT = n)}{\PP(\#_2\mT =n)}\\
        &=\sum_{1\leq m \leq n}c_k^{-1} n^{3/2} \PP(\exists v\in\mT \text{ with } d^{\circ}(v) >
        C\log n, \#_1\mT = m, \#_2\mT = n),
    \end{align*}
    where $c_k = \frac{\boldsymbol \alpha_k k! \rho_k^{-k} }{G_k^{(k)}(\rho_k)}$.
    For each of these events, there is at least one out of $m$ independent copies of
    $(\xi, \zeta)$ with $\zeta>C\log n$. But by Proposition \ref{pro:offspring} $(\xi, \zeta)$ has 
    finite exponential moments, so
    \[
        \PP(\zeta>C\log n) < a e ^{-b \cdot C\log n} = a n^{-bC},
    \]
    for some constants $a, b > 0$. Therefore,
    \begin{align*}
        \PP(\exists v\in\mT_n \text{ with } d^{\circ}(v) > C\log n )  &\leq\sum_{1\leq m \leq n}c_k^{-1} n^{3/2} (1-(1-a n^{-bC})^{m})\\
        &\leq n c_k^{-1} n^{3/2} (1-(1-a n^{-bC})^{n})\\
        &\leq c_k^{-1} n^{5/2} (1-(1-n \cdot a n^{-bC}))\\
        &= c_k^{-1} a  n^{-bC + 7/2}
    \end{align*}
    and the result follows.
\end{proof}

In the rest of this section we assume any tree $(\mT, \alpha)$ and its corresponding graph
$\Phi((\mT, \alpha))$ to be coupled. A black vertex and its corresponding clique
receive the same name and the same goes for white vertices and their corresponding
vertices. For every white vertex $v$ of $(\mT, \alpha)$, define
\begin{align*}
    h_{\mT_n}(v) & := \text{the height of } v \text{ in } \mT_n, \\
    h_{\Phi((\mT, \alpha))}(v) & := \text{the graph distance in } \Phi((\mT, \alpha))
        \text{ from the first vertex in}\\
        &\hphantom{{}:={}}\text{the root clique to }v.
\end{align*}
For every black vertex $v$ of $(\mT, \alpha)$, define
\begin{align*}
    h_{\mT_n}(v) & := \text{the height of } v \text{ in } \mT_n \\
    h_{\Phi((\mT, \alpha))}(v) & := \text{the shortest graph distance in } \Phi((\mT, \alpha))
        \text{ from the first}\\
        &\hphantom{{}:={}}\text{vertex in the root clique to any vertex in
        the clique } v.
\end{align*}

Our goal is to show that if a vertex $v$ of $(\mT_n, \alpha_n)$ has large enough height
$h_{\mT_n}(v)$, then $h_{\mG_{k, n}^{(k)}}(v)$ concentrates around $\gamma_k h_{\mT_n}(v)$,
as $n\to\infty$, for some constant $\gamma_k>0$. In order to do so, we will use the size-biased tree
$\mT^{(\ell)}$ constructed in the previous section. Recall that $\mT^{(\ell)}$
has a spine $u_0, u_1, \dots, u_{\ell}$ and that all vertices in the spine except
for its tip $u_{\ell}$ receive offspring according to an independent copy of
$(\xi^\bullet, \zeta^\bullet)$. The tree is then decorated to obtain
$(\mT_n, \alpha_n)$ by chosing uniformly for every vertex a decoration that is compatible
with its offspring. Consider, for $0\leq i\leq \ell$, the random variables
\[
    S_i:= h_{\mG_{k, n}^{(k)}}(u_i).
\]
Consider also the random variables $X_i\in\{1, \cdots, k\}$ defined as
the number of vertices in the clique $u_i$ at distance $S_i$ from the 
first vertex in the root clique of $\mG_{k, n}$.
Since any two vertices in a clique are adjacent, the graph distance in
$\mG_{k, n}$ from the first vertex in the root clique to any of the remaining
$k-X_i$ vertices in the clique $u_i$ is $S_i+1$.  By definition,
$X_0 = 1$ and $S_0=0$. Now, observe that
\begin{itemize}
    \item The random pair $(X_i, S_i)$ depends only on the previous
    pair\\ $(X_{i-1}, S_{i-1})$.
    \item The random pair $(X_i, S_i-S_{i-1})$ depends only on $X_{i-1}$.
\end{itemize}
This means that $(X_i, S_i)$ is a discrete Markov additive process. Let 
us also verify that it is irreducible and aperiodic. 

Suppose that one of the $(k+1)$-connected components in the
offspring of $u_i$ is a $(k+1)$-clique and that $u_{i+1}$ is one of its
cliques (different from $u_i$).
Then, the clique $u_{i+1}$ is obtained from $u_i$ by replacing one of its 
vertices by a new vertex at distance $S_i+1$ from the first vertex in the root
clique. We can thus say the following about $X_{i+1}$ depending on $X_i$:
\begin{itemize}
    \item If $X_i=k$, then $X_{i+1}=k-1$.
    \item If $1<X_i<k$, then $X_{i+1}$ is either equal to $X_i$ or $X_i-1$,
    depending on whether the replaced vertex was at distance $S_i+1$ or
    $S_i$ from the first vertex in the root clique.
    \item If $X_i=1$, then $X_{i+1}$ is either equal to $1$ or $k$,
    depending on the same.
\end{itemize}
Since all states are reachable from any state (possibly in many steps)
and some states are reachable from themselves, then the process is 
irreducible and aperiodic. This together with the fact that the 
offspring distribution has finite exponential moments allows us to use the
large deviation results in \cite{INN1985} for the additive component $S_i$.

\begin{lemma}\label{lem:1st_concentration}
    There exists a constant $\gamma_k>0$ such that, for every $\varepsilon>0$, if $E$ 
    denotes the event that there exists a  vertex $v\in(\mT_n, \alpha_n)$ with
    $h_{\mT_n}(v)\geq\log ^3 n $ and 
    $h_{\mG_{k, n}^{(k)}}(v) \notin \gamma_k (1\pm \varepsilon) h_{\mT_n}(v)$, then 
    $
    \PP(E) 
    \le n^{-\Theta(\log^2 n)}.
    $
\end{lemma}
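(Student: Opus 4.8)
The plan is to combine the concentration estimate for the Markov additive process $(X_i,S_i)$ along the spine with a union bound over all possible spine tips, using the structural control from Propositions~\ref{pro:offspring}, \ref{pro:degrees} and~\ref{pro:coupling} together with the size-biased description of the environment seen from a deep vertex. First I would set up the key input: by the theory of discrete Markov additive processes with finite exponential moments~\cite{INN1985}, there is a constant $\gamma_k>0$ (the asymptotic drift of $S_i/i$) and a rate function that is strictly positive away from $\gamma_k$, so that for the spine $u_0,\dots,u_\ell$ of $\mT^{(\ell)}$ decorated in the canonical way,
\[
	\PP\bigl( S_\ell \notin \gamma_k(1\pm\varepsilon)\ell \bigr) \le C_\varepsilon \exp(-c_\varepsilon \ell)
\]
for constants $C_\varepsilon,c_\varepsilon>0$ depending on $\varepsilon$ (and $k,t$) but not on $\ell$. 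Here one must be slightly careful that the decoration attached to the tip $u_\ell$ has offspring $(\xi^\circ,\zeta^\circ)$ rather than $(\xi^\bullet,\zeta^\bullet)$, but since this affects only a single boundary step it changes $S_\ell$ by $O(1)$ and does not affect the large deviation rate.

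Next I would transfer this from the size-biased tree to the conditioned tree $\mT_n$. By Lemma~\ref{lemma:size_biased}, for a fixed finite marked decorated tree with marked vertex at height $\ell$, the law of the extended fringe neighbourhood of that vertex under $(\mT^{(\ell)},\alpha^{(\ell)})$ is, up to the factor $1/\ex[\zeta]$, the law of the corresponding piece of $(\mT,\alpha)$. Concretely: fix a white vertex $w$ of $\mT_n$ at height $\ell = h_{\mT_n}(w)$; conditionally on the tree structure, the spine from the root to $w$ together with its decorations has, under the unconditioned measure, exactly the distribution of the spine of $(\mT^{(\ell)},\alpha^{(\ell)})$; and the conditioning on $\#_2\mT=n$ only inflates probabilities by the polynomial factor $O(n^{3/2})$ coming from~\eqref{eq:condprob}, exactly as in the proof of Lemma~\ref{lem:white_bound}. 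For a fixed $w$ with $\ell \ge \log^3 n$ this gives
\[
	\PP\bigl( h_{\mG_{k,n}^{(k)}}(w) \notin \gamma_k(1\pm\varepsilon)\ell \bigr) \le O(n^{3/2})\, C_\varepsilon \exp(-c_\varepsilon \log^3 n) = n^{-\Theta(\log^2 n)}.
\]
Then I would take a union bound over all $n$ white vertices (and, via the observation that each black vertex sits at graph-tree-distance $1$ from a white one, also over the black vertices), so that the extra factor $n$ is absorbed into $n^{-\Theta(\log^2 n)}$, which proves the claim.

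The main obstacle is the transfer step: justifying rigorously that the law of the decorated spine leading to a \emph{fixed} deep vertex $w$ of $\mT_n$, after the conditioning $\{\#_2\mT=n\}$, is dominated (up to a polynomial factor) by the law of the spine of the size-biased tree $\mT^{(\ell)}$. One has to be careful that conditioning on $\#_2\mT=n$ is a global event, not just about the spine; the clean way is to write $\PP(\#_2\mT=n) \ge c\, n^{-3/2}$ from~\eqref{eq:condprob}, so that $\PP(\,\cdot \mid \#_2\mT=n) \le O(n^{3/2})\PP(\,\cdot\,)$, and then the spine event under the \emph{unconditioned} measure is handled by Lemma~\ref{lemma:size_biased} (summing the $1/\ex[\zeta]$-weighted size-biased probability over heights, or simply noting it is bounded by $\ex[\zeta]$ times the size-biased probability). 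A secondary point is to confirm that $h_{\mG_{k,n}^{(k)}}(w)$ really is determined by the additive component $S_\ell$ up to an additive $O(1)$ (namely $S_\ell$ or $S_\ell+1$ according to $X_\ell$), which follows directly from the definitions of $S_i$ and $X_i$ given above and the fact that the root $k$-clique vertices are mutually adjacent.
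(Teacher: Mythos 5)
Your overall strategy is the same as the paper's: transfer the bad event to the size-biased decorated tree $(\mT^{(\ell)},\alpha^{(\ell)})$ via Lemma~\ref{lemma:size_biased}, pay the polynomial factor $O(n^{3/2})$ coming from $\PP(\#_2\mT=n)\sim c_k n^{-3/2}$, apply the large deviation bound for the Markov additive process $(X_i,S_i)$ from \cite{INN1985}, and sum over heights $\ell\ge\log^3 n$. The paper folds your union bound over the $n$ white vertices into a single sum over marked decorated trees in a bad class $\mathcal{E}_{\ell,n}$, which is only a cosmetic difference from your ``fix a vertex $w$, then union bound'' phrasing; the quantitative inputs and the final estimate $n^{3/2}\cdot\mathrm{poly}(n)\cdot e^{-b\log^3 n}=n^{-\Theta(\log^2 n)}$ are identical.

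The one step that is wrong as stated is your ``secondary point.'' The marked white vertex $w$ is a white child of the tip $u_\ell$ in the tree, but in the graph it is a non-root vertex of one of the $(k+1)$-connected components glued at the clique $u_\ell$, and such a vertex need not be adjacent to that clique (non-root vertices of a graph in $\mathcal{G}_{k+1}^{(k)}$ can be far from the root $k$-clique). Hence $h_{\mG_{k,n}^{(k)}}(w)$ is not $S_\ell$ or $S_\ell+1$: the discrepancy is bounded only by the size of that component, i.e.\ by the white outdegree of the tip, and the same objection applies to your claim that the $(\xi^\circ,\zeta^\circ)$ boundary step contributes only $O(1)$. The paper handles exactly this point via Lemma~\ref{lem:white_bound} (equivalently, the exponential tails of $\zeta$), writing $d=S_\ell+\mathcal{O}_p(\log n)$; this extra term is harmless against the window $\varepsilon\gamma_k\ell\ge\varepsilon\gamma_k\log^3 n$, provided its failure probability (exponentially small in $\ell$) is also absorbed into the union bound together with the $n^{3/2}$ factor. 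With that correction your argument goes through and coincides with the paper's proof.
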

\begin{proof}
    Let $\mathcal{E}_{\ell, n}$ be the set of decorated trees $(\tau, \alpha_{\tau})\in\mathfrak{A}$
    with $n$ white vertices, a white marked vertex $v_{\tau}$ of height $\ell$, and such that
    the distance in $\Phi(\tau, \alpha_{\tau})$ between the first vertex in the root $k$-clique 
    and $v_{\tau}$ is not in the range $ \gamma_k (1\pm \varepsilon)\ell$. Then, using
    Proposition \ref{pro:offspring} and Lemma \ref{lemma:size_biased},
    \begin{align}
    	\label{eq:idarg}
        \PP(E)&\leq \sum_{\ell=\lfloor\log^3n\rfloor}^n\sum_{(\tau, \alpha_{\tau})\in
            \mathcal{E}_{\ell, n}}\PP((\mT_n, \alpha_n)=(\tilde{\tau}, \alpha_{\tau})) \nonumber\\
        &= \sum_{\ell=\lfloor\log^3n\rfloor}^n\sum_{(\tau, \alpha_{\tau})\in
            \mathcal{E}_{\ell, n}}\PP((\mT, \alpha)=(\tilde{\tau}, \alpha_{\tau})\mid \#_2\mT =n) \nonumber\\
        &= \sum_{\ell=\lfloor\log^3n\rfloor}^n\sum_{(\tau, \alpha_{\tau})\in
            \mathcal{E}_{\ell, n}}\frac{\PP((\mT, \alpha)=(\tilde{\tau}, \alpha_{\tau}))}{\PP(\#_2\mT =n)} \nonumber\\
        &=   c_k^{-1} n^{3/2}\ex[\zeta]\sum_{\ell=\lfloor\log^3n\rfloor}^n
            \sum_{(\tau, \alpha_{\tau})\in \mathcal{E}_{\ell, n}}
            \PP((\mT^{(\ell)}, \alpha^{(\ell)}) =(\tau, \alpha_{\tau})) \nonumber\\
        &=   c_k^{-1} n^{3/2}\ex[\zeta]\sum_{\ell=\lfloor\log^3n\rfloor}^n
            \PP((\mT^{(\ell)}, \alpha^{(\ell)}) \in \mathcal{E}_{\ell, n}), 
    \end{align}
    where $c_k = \frac{\boldsymbol \alpha_k k! \rho_k^{-k} }{G_k^{(k)}(\rho_k)}$.
    But the distance in $\Phi((\mT^{(\ell)}, \alpha^{(\ell)}))$ between the first vertex in the
    root clique and the marked vertex, call it $d$, satisfies
    \[
        d=S_{\ell} + \mathcal{O}_p(\log n).
    \]
    Indeed, the distance to clique corresponding to the tip of the  spine is given by the 
    discrete Markov additive process $(X_i, S_i)$ described above and the  remaining 
    distance is $\mathcal{O}_p(\log n)$ because we know from Lemma \ref{lem:white_bound}
    that the size of a $(k+1)$-connected component is $\mathcal{O}_p(\log n)$. It thus
    follows from \cite[Thm. 5.1]{INN1985} (see also \cite[Rem. 3.5 and Sec. 7, (ii)]{INN1985}) that, for large enough $\ell$,
    \[
        \PP((\mT^{(\ell)}, \alpha^{(\ell)}) \in \mathcal{E}_{\ell, n}) < ae^{-b\ell},
    \]
    for some $a, b> 0$. Therefore, for large enough $n$,
    \begin{align*}
        \PP(E)&< c_k^{-1} n^{3/2}\ex[\zeta]\sum_{\ell=\lfloor\log^3n\rfloor}^n
            ae^{-b\ell}\\
            &= n^{- \Theta(\log^2 n)}.
    \end{align*}
\end{proof}

\begin{lemma}\label{lem:2nd_concentration}
    For every vertex $v\in(\mT_n, \alpha_n)$, it holds that 
    \[
        h_{\mG_{k, n}^{(k)}}(v) \in\gamma_k (1\pm \varepsilon) h_{\mT_n}(v) + \mathcal{O}_p(\log^4 n).
    \]
\end{lemma}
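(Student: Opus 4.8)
The plan is to deduce this uniform (over \emph{all} vertices) statement from Lemma~\ref{lem:1st_concentration}, which only controls vertices of height at least $\log^3 n$, by splitting into two regimes according to the tree-height $h_{\mT_n}(v)$. For vertices $v$ with $h_{\mT_n}(v) \ge \log^3 n$, Lemma~\ref{lem:1st_concentration} already gives that the event $E$ (some such vertex fails the concentration $h_{\mG_{k,n}^{(k)}}(v) \in \gamma_k(1\pm\varepsilon) h_{\mT_n}(v)$) has probability at most $n^{-\Theta(\log^2 n)} \to 0$, so with high probability every such vertex satisfies the stronger bound $h_{\mG_{k,n}^{(k)}}(v) \in \gamma_k(1\pm\varepsilon) h_{\mT_n}(v)$ with no additive error at all. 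This handles the large-height regime, and the $\mathcal{O}_p(\log^4 n)$ slack is not even needed there.

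For the complementary regime, suppose $h_{\mT_n}(v) < \log^3 n$. Here $\gamma_k(1\pm\varepsilon) h_{\mT_n}(v)$ is itself only $\mathcal{O}(\log^3 n)$, so it suffices to show that $h_{\mG_{k,n}^{(k)}}(v)$ is itself $\mathcal{O}_p(\log^4 n)$ uniformly over these vertices; then both sides of the claimed inclusion are absorbed into the error term. To bound $h_{\mG_{k,n}^{(k)}}(v)$: the graph-distance from the first root vertex to (the clique of) $v$ is obtained by traversing at most $h_{\mT_n}(v) < \log^3 n$ cliques along the tree-path from the root to $v$, and each step of this traversal increases the graph distance by the "diameter contribution" of one decoration, i.e.\ by at most the number of vertices in the corresponding $\mathrm{SET}(\mathcal{G}_{k+1}^{(k)})$ decoration. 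By Lemma~\ref{lem:white_bound} the maximum white outdegree in $\mT_n$ is $\mathcal{O}_p(\log n)$, and since a decoration with $d$ non-root $k$-cliques and $w$ extra vertices has all its structure bounded in terms of $w$ (each $(k{+}1)$-connected component contributes boundedly many new vertices), the same argument bounds the size — hence the diameter — of each individual decoration by $\mathcal{O}_p(\log n)$, uniformly over all black vertices of $\mT_n$ (the union bound over $\le n$ vertices costs only a polynomial factor, which is killed by the stretched-exponential tail of $\zeta$ exactly as in the proof of Lemma~\ref{lem:white_bound}). Multiplying the per-step cost $\mathcal{O}_p(\log n)$ by the number of steps $< \log^3 n$ gives $h_{\mG_{k,n}^{(k)}}(v) = \mathcal{O}_p(\log^4 n)$, uniformly over all $v$ with $h_{\mT_n}(v) < \log^3 n$.

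Combining the two regimes: on the high-probability event from Lemma~\ref{lem:1st_concentration} intersected with the high-probability event that every decoration has size $\mathcal{O}(\log n)$, every vertex $v$ satisfies $h_{\mG_{k,n}^{(k)}}(v) \in \gamma_k(1\pm\varepsilon) h_{\mT_n}(v) + \mathcal{O}(\log^4 n)$, which is exactly the assertion. The main obstacle is the bookkeeping in the small-height regime: one must argue cleanly that the graph distance along a tree-path of $h$ cliques is bounded by $h$ times a uniform-in-$n$-and-in-vertex bound on the per-decoration diameter, and that this uniform bound is $\mathcal{O}_p(\log n)$ — this requires the union-bound-plus-exponential-moments trick of Lemma~\ref{lem:white_bound} applied to the decoration sizes rather than just the white outdegrees, but it is otherwise routine.
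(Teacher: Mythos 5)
Your proposal is correct and follows essentially the same route as the paper: split at height $\log^3 n$, use Lemma~\ref{lem:1st_concentration} for the tall vertices, and for short vertices bound the graph distance by the tree height times the per-decoration size, which is $\mathcal{O}_p(\log n)$ by Lemma~\ref{lem:white_bound}. (Your worry about bounding decoration sizes separately is moot, since the number of non-root vertices of a decoration equals the white outdegree of the corresponding black vertex, so Lemma~\ref{lem:white_bound} applies directly.)
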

\begin{proof}
    Lemma \ref{lem:1st_concentration} covers the case where $h_{\mT_n}(v)\geq\log ^3 n$.
    Suppose now that $ h_{\mT_n}(v) <\log ^3 n $. Then, the distance  $h_{\mG_{k, n}^{(k)}}(v)$
    is bounded by $h_{\mT_n}(v)$ times the size of each $(k+1)$-connected component,
    which is bounded by  $\mathcal{O}_p(\log n)$ by Lemma~\ref{lem:white_bound}, and so the result follows.
\end{proof}

We are now ready to prove Theorem \ref{te:scaling}.

\begin{proof}[Proof of Theorem~\ref{te:scaling}]
    For any two white vertices $u, v\in(\mT_n, \alpha_n)$ it is clear that
    \[
        d_{\mT_n}(u, v) = h_{\mT_n}(u)+h_{\mT_n}(u)-2h_{\mT_n}(w),
    \]
    where $w$ is the lowest common ancestor of $u$ and $v$ in $\mT_n$.
    And it also holds that
    \[
        d_{\mG_{k, n}^{(k)}}(u, v) = h_{\mG_{k, n}^{(k)}}(u)+h_{\mG_{k, n}^{(k)}}(v)-2h_{\mG_{k, n}^{(k)}}(w) + \mathcal{O}_p(\log n),
    \]
    since the shortest path in $\mG_{k, n}^{(k)}$ between $u$ and $v$ may not go
    through the $k$-clique $w$ but it certainly goes through
    some of the white children of $w$.
    
    Therefore, using Lemma \ref{lem:2nd_concentration} we obtain that, for every $\varepsilon>0$,
    \[
        d_{\mG_{k, n}^{(k)}}(u, v) \in \gamma_k (1\pm \varepsilon)d_{\mT_n}(u, v) + \mathcal{O}_p(\log^4 n).
    \]
    And thus it follows that
    \[
        \frac{\abs{d_{\mG_{k, n}^{(k)}}(u, v) - \gamma_k d_{\mT_n}(u, v)}}{\sqrt{n}}
        \stackrel{p}{\longrightarrow} 0.
    \]
    This together with Proposition \ref{pro:scaling limit} implies that
    \[
        \left( \mG_{k,n}^{(k)}, \kappa_{k}n^{-1/2}d_{\mG_{k,n}^{(k)}}, \mu_{\mG_{k,n}^{(k)}} \right) \stackrel{d}{\longrightarrow}
        \left( \mathscr{T}_e, d_{\mathscr{T}_e}, \mu_{\mathscr{T}_e} \right)
    \]
    in the Gromov--Hausdorff--Prokhorov sense as $n \to \infty$, where $\kappa_k
    =\gamma_k^{-1}$. Finally,
    the result follows from this by Lemma \ref{le:deroot}.
\end{proof}

Finally, we prove the tail-bounds for the diameter.

\begin{proof}[Proof of Theorem~\ref{te:tailbound}]
Any graph from $\mathcal{G}_{k, n}$ has at least $k(n-k)+1$ many $k$-cliques. Indeed, take a perfect elimination ordering of the vertices in the graph and remove them one by one. Since a vertex in a minimal separator will never be removed, the graph remains $k$-connected until there is only a $k$-clique left. Therefore, every time a vertex is removed, the number of $k$-cliques decreases
by at least $k$, and the claim follows.

Combining this fact with the asymptotic~\ref{eq:asymptoticclique}, it follows that it suffices to verify the stated tail-bounds for the diameter of $\mG_{k,n}^{(k)}$ instead of $\mG_{k,n}$. Furthermore, since the diameter is at most twice the height plus 1 (with height referring to the maximal distance of a vertex from the root $k$-clique), it hence suffices to show such a bound for the height of~$\mG_{k,n}^{(k)}$. Any vertex $v$ in $\mG_{k,n}^{(k)}$ that does not belong to the root-clique corresponds to a white vertex (also denoted by $v$) in the tree $\mT_n$. Similar as argued in the proof of Theorem~\ref{te:scaling}, the geodesics in $\mG_{k,n}^{(k)}$ from the root-clique to $v$ need to pass through the cliques corresponding to sequence $v_0, \ldots, v_\ell$ of black ancestors of $v$ in $\mT_n$. Therefore, apart from the first vertex that belongs to the root-clique all vertices in such a geodesic correspond to white children of $v_0, \ldots, v_\ell$ in $\mT_n$. Thus, the sum $S(v)$ of the number of white children of $v_0, \ldots, v_\ell$ is an upper bound for the height of $v$ in $\mG_{k,n}^{(k)}$. Therefore, in order to prove Theorem~\ref{te:tailbound}, it suffices to show that there exist constants $C,c>0$ such that
\begin{align}
	\label{eq:fin3}
	\mathbb{P}(\max_v S(v) \ge x) \le C \exp(-cx^2 / n)
\end{align}
for all  $n$ and $x>0$, with the index $v$ ranging over the white vertices in the tree $\mT_n$. Since we may always replace $C$ by some larger constant and $c$ by some smaller constant it furthermore suffice to verify such a bound for all $x> \sqrt{n}$ instead of $x>0$. Furthermore, since the height can be at most $n$, it also suffices to consider $x \le n$.

With foresight, set \[
\alpha = \frac{1}{2 \mathbb{E}[\zeta^\bullet]}.
\] Proposition~\ref{pro:scaling limit} ensures that there exist constants $C_1, c_1>0$ with
\begin{align}
	\label{eq:parta}
	\PP(\mathrm{H}(\mT_n) \ge \alpha x) \le C_1 \exp(-c_1 x^2 / n).	
\end{align}
By an identical calculation as in Equation~\eqref{eq:idarg} the probability that $\mathrm{H}(\mT_n) < \alpha x$ and at the same time there exists of some white vertex $v$ with $S(v)>x$ is bounded by
\[
O(n^{3/2}) \sum_{\ell = 1}^{\lfloor \alpha x \rfloor} p_\ell,
\]
with $p_\ell$ denoting the probability that the sum of white children of the black ancestors of the marked vertex of $\mT^{(\ell)}$ is larger than $x$. These numbers of children are distributed like independent copies $\zeta_1^\bullet, \ldots, \zeta_\ell^\bullet$ of $\zeta^\bullet$. Hence
\[
	p_\ell= \PP\left(\sum_{i=1}^\ell \zeta_i^\bullet > x\right).
\]
Using $\ell \le \alpha x$ and $x \ge \sqrt{n}$ it follows by Lemma~\ref{le:deviation} that
\[
	p_\ell \le \exp(- \Theta(x)).
\]
This means our upper bound simplifies to
\begin{align*}
O(n^{3/2}) \sum_{\ell = 1}^{\lfloor \alpha x \rfloor} p_\ell &= O(n^{3/2})x \exp(- \Theta(x)) \\
&= O(x^4) \exp(- \Theta(x)) \\
&= \exp(- \Theta(x)).
\end{align*}
Together with Inequality~\eqref{eq:parta} this proves Inequality~\eqref{eq:fin3} and hence completes the proof.

\end{proof}

\appendix

\section{Deviation inequality}

In our proof we make  use of the following medium deviation inequality found in most textbooks on the subject. See for example~\cite[Ex. 1.4]{MR3309619}.

\begin{lemma}
	\label{le:deviation}
	Let $(X_i)_{i \in \mathbb{N}}$ be an i.i.d. family of real-valued random variables with $\mathbb{E}[X_1] = 0$ and $\mathbb{E}[e^{t X_1}] < \infty$ for all $t$ in some open interval around zero. Then there are constants $\delta, c>0$ such that for all $n\in \mathbb{N}$, $x \ge 0$ and $0 \le\lambda\le\delta$ it holds that \[\PP(|X_1 + \ldots + X_n| \ge x) \le 2 \exp(c n \lambda^2 - \lambda x).\]
\end{lemma}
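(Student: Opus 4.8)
The plan is to use the classical Chernoff bounding technique together with a second-order Taylor expansion of the cumulant generating function. Write $S_n = X_1 + \dots + X_n$ and $\psi(t) = \log \mathbb{E}[e^{tX_1}]$. By hypothesis there is an open interval $(-a,a)$ on which $\mathbb{E}[e^{tX_1}]$ is finite; on this interval $\psi$ is smooth — one may differentiate under the integral sign, a standard property of moment generating functions in the interior of their domain of convergence — with $\psi(0)=0$, $\psi'(0)=\mathbb{E}[X_1]=0$, and $\psi''$ continuous near the origin. Hence, by Taylor's theorem with remainder, there exist $\delta \in (0,a)$ and $c>0$ such that $\psi(t) \le c\, t^2$ for all $|t| \le \delta$.

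Next I would apply the exponential Markov inequality. For $0 \le \lambda \le \delta$ and $x \ge 0$, using independence and the identical distribution of the $X_i$,
\begin{align*}
	\mathbb{P}(S_n \ge x) &= \mathbb{P}\bigl(e^{\lambda S_n} \ge e^{\lambda x}\bigr) \le e^{-\lambda x}\,\mathbb{E}\bigl[e^{\lambda S_n}\bigr] \\
	&= e^{-\lambda x}\,\mathbb{E}\bigl[e^{\lambda X_1}\bigr]^{n} = \exp\bigl(n\psi(\lambda) - \lambda x\bigr) \le \exp\bigl(c\, n \lambda^2 - \lambda x\bigr),
\end{align*}
where the last step uses the Taylor bound for $\psi$.

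For the lower tail I would repeat the argument with each $X_i$ replaced by $-X_i$: the family $(-X_i)_{i}$ is again i.i.d. with mean zero and with moment generating function finite on $(-a,a)$, so after possibly enlarging $c$ and shrinking $\delta$ to accommodate both cases simultaneously, the same computation gives $\mathbb{P}(S_n \le -x) \le \exp(c\, n \lambda^2 - \lambda x)$ for $0 \le \lambda \le \delta$. Combining the two one-sided estimates with a union bound yields
\[
	\mathbb{P}(|S_n| \ge x) \le \mathbb{P}(S_n \ge x) + \mathbb{P}(S_n \le -x) \le 2\exp\bigl(c\, n \lambda^2 - \lambda x\bigr),
\]
which is the asserted bound.

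There is essentially no serious obstacle here, as this is a textbook medium-deviation estimate; the only points requiring a modicum of care are the justification that $\psi$ is twice continuously differentiable at the origin with the stated values of $\psi(0)$, $\psi'(0)$ (the classical smoothness of the cumulant generating function on the interior of its domain), and the bookkeeping needed to select a single pair of constants $(\delta,c)$ that works simultaneously for the upper and lower tails.
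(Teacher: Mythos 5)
Your proof is correct: the Chernoff bound combined with a second-order Taylor expansion of the cumulant generating function $\psi$ near the origin (using $\psi(0)=\psi'(0)=0$ and boundedness of $\psi''$ on $[-\delta,\delta]$) gives exactly the stated estimate, and the symmetrization plus union bound handles the absolute value. The paper itself gives no proof of this lemma, citing it as a standard textbook medium-deviation inequality, and your argument is precisely that standard argument, so there is nothing to reconcile.
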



\section*{Acknowledgement}

The authors are grateful to Louigi Addario-Berry for helpful discussions on random trees with prescribed degree sequences. The authors also warmly thank the referees for their helpful remarks. The first author acknowledges support from the CERCA Programme/Generalitat de Catalunya and from the Spanish State Research Agency, through the Severo Ochoa and María de Maeztu Program for Centers and Units of Excellence in R\&D (CEX2020-001084-M). The second author is funded in part by the Austrian Science Fund (FWF) 10.55776/PAT6732623.

\bibliographystyle{abbrv}
\bibliography{main}

\begin{thebibliography}{10}

\bibitem{zbMATH06247183}
R.~{Abraham}, J.-F. {Delmas}, and P.~{Hoscheit}.
\newblock {A note on the Gromov-Hausdorff-Prokhorov distance between (locally)
  compact metric measure spaces}.
\newblock {\em {Electron. J. Probab.}}, 18:21, 2013.
\newblock Id/No 14.

\bibitem{MR3201925}
R.~Abraham, J.-F.~c. Delmas, and P.~Hoscheit.
\newblock Exit times for an increasing {L}\'evy tree-valued process.
\newblock {\em Probab. Theory Related Fields}, 159(1-2):357--403, 2014.

\bibitem{MR4479914}
L.~Addario-Berry, A.~Brandenberger, J.~Hamdan, and C.~Kerriou.
\newblock Universal height and width bounds for random trees.
\newblock {\em Electron. J. Probab.}, 27:Paper No. 118, 24, 2022.

\bibitem{MR3077536}
L.~Addario-Berry, L.~Devroye, and S.~Janson.
\newblock Sub-{G}aussian tail bounds for the width and height of conditioned
  {G}alton-{W}atson trees.
\newblock {\em Ann. Probab.}, 41(2):1072--1087, 2013.

\bibitem{addarioberry2022random}
L.~Addario-Berry and S.~Donderwinkel.
\newblock Random trees have height ${O}(\sqrt{n})$.
\newblock {\em Ann. Probab., to appear. Preprint: arXiv:2201.11773}.

\bibitem{MR1085326}
D.~Aldous.
\newblock The continuum random tree. {I}.
\newblock {\em Ann. Probab.}, 19(1):1--28, 1991.

\bibitem{MR1166406}
D.~Aldous.
\newblock The continuum random tree. {II}. {A}n overview.
\newblock In {\em Stochastic analysis ({D}urham, 1990)}, volume 167 of {\em
  London Math. Soc. Lecture Note Ser.}, pages 23--70. Cambridge Univ. Press,
  Cambridge, 1991.

\bibitem{MR1207226}
D.~Aldous.
\newblock The continuum random tree. {III}.
\newblock {\em Ann. Probab.}, 21(1):248--289, 1993.

\bibitem{manus}
D.~Amankwah, J.~Bj\"{o}rnberg, S.~O. Stef\'ansson, B.~Stufler, and J.~Turunen.
\newblock Scaling limits of $2$-connected series-parallel maps.
\newblock {\em Manuscript}.

\bibitem{zbMATH06610054}
S.~{Athreya}, W.~{L\"ohr}, and A.~{Winter}.
\newblock {The gap between Gromov-Vague and Gromov-Hausdorff-vague topology}.
\newblock {\em {Stochastic Processes Appl.}}, 126(9):2527--2553, 2016.

\bibitem{bienvenu2022branching}
F.~Bienvenu and J.-J. Duchamps.
\newblock A branching process with coalescence to model random phylogenetic
  networks.
\newblock {\em arXiv:2211.02407}, 2022.

\bibitem{MR1700749}
P.~Billingsley.
\newblock {\em Convergence of probability measures}.
\newblock Wiley Series in Probability and Statistics: Probability and
  Statistics. John Wiley \& Sons, Inc., New York, second edition, 1999.
\newblock A Wiley-Interscience Publication.

\bibitem{MR3188597}
N.~Broutin and J.-F. Marckert.
\newblock Asymptotics of trees with a prescribed degree sequence and
  applications.
\newblock {\em Random Structures Algorithms}, 44(3):290--316, 2014.

\bibitem{MR1835418}
D.~Burago, Y.~Burago, and S.~Ivanov.
\newblock {\em A course in metric geometry}, volume~33 of {\em Graduate Studies
  in Mathematics}.
\newblock American Mathematical Society, Providence, RI, 2001.

\bibitem{MR3573291}
A.~Caraceni.
\newblock The scaling limit of random outerplanar maps.
\newblock {\em Ann. Inst. Henri Poincar\'{e} Probab. Stat.}, 52(4):1667--1686,
  2016.

\bibitem{CDNR2023}
J.~Castellví, M.~Drmota, M.~Noy, and C.~Requilé.
\newblock Chordal graphs with bounded tree-width, 2023.

\bibitem{CURIEN2018}
N.~Curien.
\newblock Random graphs – the local convergence point of view, 2018.

\bibitem{MR3382675}
N.~Curien, B.~Haas, and I.~Kortchemski.
\newblock The {CRT} is the scaling limit of random dissections.
\newblock {\em Random Structures Algorithms}, 47(2):304--327, 2015.

\bibitem{DJS2016}
M.~Drmota, E.~Y. Jin, and B.~Stufler.
\newblock Graph limits of random graphs from a subset of connected $k$-trees,
  2016.

\bibitem{zbMATH07118076}
M.~Drmota, E.~Y. Jin, and B.~Stufler.
\newblock Graph limits of random graphs from a subset of connected
  {{\(k\)}}-trees.
\newblock {\em Random Struct. Algorithms}, 55(1):125--152, 2019.

\bibitem{MR2483235}
P.~Flajolet and R.~Sedgewick.
\newblock {\em Analytic combinatorics}.
\newblock Cambridge University Press, Cambridge, 2009.

\bibitem{INN1985}
I.~Iscoe, P.~Ney, and E.~Nummelin.
\newblock Large deviations of uniformly recurrent markov additive processes.
\newblock {\em Advances in Applied Mathematics}, 6(4):373--412, 1985.

\bibitem{JANSON2012}
S.~Janson.
\newblock {Simply generated trees, conditioned Galton–Watson trees, random
  allocations and condensation}.
\newblock {\em Probability Surveys}, 9:103 -- 252, 2012.

\bibitem{JANSON2016}
S.~Janson.
\newblock Graph limits and hereditary properties.
\newblock {\em European Journal of Combinatorics}, 52:321--337, 2016.

\bibitem{janson2020gromovprohorov}
S.~Janson.
\newblock On the {G}romov-{P}rohorov distance.
\newblock {\em arXiv:2005.13505}, 2020.

\bibitem{MR3342658}
S.~Janson and S.~{\"O}. Stef{\'a}nsson.
\newblock Scaling limits of random planar maps with a unique large face.
\newblock {\em Ann. Probab.}, 43(3):1045--1081, 2015.

\bibitem{JS2020}
E.~Y. Jin and B.~Stufler.
\newblock Graph limits of random unlabelled k-trees.
\newblock {\em Combinatorics, Probability and Computing}, 29(5):722–746,
  2020.

\bibitem{MR2571957}
G.~Miermont.
\newblock Tessellations of random maps of arbitrary genus.
\newblock {\em Ann. Sci. \'Ec. Norm. Sup\'er. (4)}, 42(5):725--781, 2009.

\bibitem{PSW2016}
K.~Panagiotou, B.~Stufler, and K.~Weller.
\newblock {Scaling limits of random graphs from subcritical classes}.
\newblock {\em The Annals of Probability}, 44(5):3291 -- 3334, 2016.

\bibitem{MR3024835}
S.~T. Rachev, L.~B. Klebanov, S.~V. Stoyanov, and F.~J. Fabozzi.
\newblock {\em The methods of distances in the theory of probability and
  statistics}.
\newblock Springer, New York, 2013.

\bibitem{MR3309619}
F.~Rassoul-Agha and T.~Sepp\"al\"ainen.
\newblock {\em A course on large deviations with an introduction to {G}ibbs
  measures}, volume 162 of {\em Graduate Studies in Mathematics}.
\newblock American Mathematical Society, Providence, RI, 2015.

\bibitem{MR3853863}
B.~Stufler.
\newblock Random enriched trees with applications to random graphs.
\newblock {\em Electron. J. Combin.}, 25(3):Paper No. 3.11, 81, 2018.

\bibitem{MR4132643}
B.~Stufler.
\newblock Limits of random tree-like discrete structures.
\newblock {\em Probab. Surv.}, 17:318--477, 2020.

\bibitem{MR4414401}
B.~Stufler.
\newblock Rerooting multi-type branching trees: the infinite spine case.
\newblock {\em J. Theoret. Probab.}, 35(2):653--684, 2022.

\bibitem{MR0731595}
G.~Szekeres.
\newblock Distribution of labelled trees by diameter.
\newblock In {\em Combinatorial mathematics, {X} ({A}delaide, 1982)}, volume
  1036 of {\em Lecture Notes in Math.}, pages 392--397. Springer, Berlin, 1983.

\bibitem{zbMATH05306371}
C.~{Villani}.
\newblock {\em {Optimal transport. Old and new}}, volume 338.
\newblock Berlin: Springer, 2009.

\bibitem{MR3434205}
M.~Wang.
\newblock Height and diameter of {B}rownian tree.
\newblock {\em Electron. Commun. Probab.}, 20:no. 88, 15, 2015.

\bibitem{WORMALD1985}
N.~C. Wormald.
\newblock Counting labelled chordal graphs.
\newblock {\em Graphs and Combinatorics}, 1:193–200, 1985.

\end{thebibliography}

\end{document}